\let\emptyset \undefined
\let\ge       \undefined
\let\le       \undefined
\let\leq\le
\newcommand\dela[1]{\Green{{\small{14.12.2009}}}}
\theoremstyle{plain}
\newcommand{\n}{\|}
\newcommand\E{{\mathbb E}}
\newcommand\R{{\mathbb R}}
 \newtheorem{thm}{Theorem}[section]
 \newtheorem{lem}[thm]{Lemma}
 \theoremstyle{definition}
 \theoremstyle{remark}
 \newtheorem{ex}[thm]{Example}
 \numberwithin{equation}{section}  
\def\d{\text{\rm{d}}}
\newcommand{\F}{\mathscr{F}}
\renewcommand{\P}{\mathbb{P}}
\newcommand{\dom}{{\mathsf D}}
\newcommand{\e}{\varepsilon}
\begin{document}
%
%
%
%
%
%
%
%
%
\title[Stochastic convolutions in $2$-smooth Banach spaces]
 {A maximal inequality for stochastic convolutions in $2$-smooth Banach spaces}

\author{Jan van Neerven and Jiahui Zhu} 
\address{Delft Institute of Applied Mathematics\\
Delft University of Technology\\
P.O. Box 5031, 2600 GA Delft\\
The Netherlands} 
\email{J.M.A.M.vanNeerven/Jiahui.Zhu@TUDelft.nl}

\subjclass{Primary 60H05; Secondary 60H15}

\keywords{Stochastic convolutions, maximal inequality, $2$-smooth Banach spaces, It\^o formula}

\thanks{The authors are supported by VICI subsidy 639.033.604
of the Netherlands Organisation for Scientific Research (NWO)}

\date{today}

\begin{abstract}
Let $(e^{tA})_{t\ge 0}$ be a $C_0$-contraction 
semigroup on a $2$-smooth Banach space $E$,
let $(W_t)_{t\ge 0}$ be a cylindrical Brownian motion in a Hilbert space $H$, and let
$(g_t)_{t\ge 0}$ be a progressively measurable process with values in the space $\gamma(H,E)$
of all $\gamma$-radonifying operators from $H$ to $E$.
We prove that for all $0<p<\infty $ there exists a constant $C$, depending only 
on $p$ and $E$, such that for all $T\ge 0$ we have
$$\E \sup_{0\le t\le T} \Big\n   \int_0^t e^{(t-s)A}g_s\,\d W_s\Big\n ^p 
\le C \E \Big(\int_0^T \n  g_t\n _{\gamma(H,E)}^2 \,\d t\Big)^\frac{p}{2}.
$$
For $p\ge 2$ the proof is based on the observation that $\psi(x) = \n x\n^p$
is Fr\'echet differentiable and its derivative satisfies 
the Lipschitz estimate $\n \psi'(x) - \psi'(y)\n \le C(\n x\n + \n y\n)^{p-2}\n x-y\n$;
the extension to $0<p<2$ proceeds via Lenglart's inequality.
\end{abstract}

\maketitle

\section{Introduction}

Let $(e^{tA})_{t\ge 0}$ be a $C_0$-contraction 
semigroup on a $2$-smooth Banach space $E$ and
let $(W_t)_{t\ge 0}$ be a cylindrical Brownian motion in a Hilbert space $H$. Let
$(g_t)_{t\ge 0}$ be a progressively measurable process with values in the space $\gamma(H,E)$ 
of all $\gamma$-radonifying operators from $H$ to $E$ satisfying
$$ \int_0^T \n g_t\n _{\gamma(H,E)}^2\,\d t < \infty \ \ \hbox{$\P$-almost surely}$$  
for all $T\ge 0$. As is well known (see \cite{[Dett], [Neidhardt], [Ondrejat]}), 
under these assumptions the stochastic convolution process
$$ X_t = \int_0^t e^{(t-s)A}g_s\,\d W_s, \quad t\ge 0,$$  
is well-defined in $E$ and provides the unique mild solution of the stochastic initial value problem
$$  \,\d X_t = AX_t \,\d t + g_t\,\d W_t, \quad X_0 = 0.$$
In order to obtain
the existence of a continuous version of this process, one usually 
proves a maximal estimate of the form
\begin{align}\label{eq:max} \E \sup_{0\le t\le T} \n  X_t\n ^p 
\le C^p \E \Big(\int_0^T \n  g_t\n _{\gamma(H,E)}^2 \,\d t\Big)^\frac{p}{2}.
\end{align}
The first such estimate was obtained by Kotelenez \cite{[Kot1], [Kot2]} for $C_0$-contraction semigroups 
on Hilbert spaces $E$ and exponent $p=2$. 
Tubaro \cite{[Tub]} extended this result to exponents $p\ge 2$ by a different method of proof
which applies It\^o's formula to the $C^2$-mapping $x\mapsto \n  x\n ^p$. The case $p\in (0,2)$
was covered subsequently by Ichikawa \cite{[Ich]}.
A very simple proof, still for $C_0$-contraction semigroups on Hilbert spaces, 
which works for all $p\in (0,\infty)$, was obtained recently by Hausenblas and Seidler \cite{[HauSei]}.
It is based on the Sz.-Nagy dilation theorem, which is used to reduce the problem to the corresponding
problem for $C_0$-contraction groups. Then, by using the group property, the maximal estimate follows from 
Burkholder's inequality. This proof shows, moreover, that the constant $C$ in \eqref{eq:max}
may be taken equal to the constant appearing in Burkholder's inequality. In particular,
this constant depends only on $p$.

The maximal inequality 
\eqref{eq:max} has been extended by Brze\'zniak and Peszat \cite{[BrzPes]} 
to $C_0$-contraction semigroups on Banach spaces $E$ with the property that, for some $p\in [2,\infty)$,
$x\mapsto \n x\n ^p$ is twice continuously Fr\'echet differentiable and the first and second 
Fr\'echet derivatives are bounded by constant multiples of $\n x\n^{p-1}$ and $\n x\n^{p-2}$, respectively. 
Examples of spaces with this property, which we shall call $(C_p^2)$, are the spaces $L^q(\mu)$
for $q\in [p,\infty)$. Any $(C_p^2)$ space is $2$-smooth (the definition is recalled in Section \ref{sec:smooth}), 
but the converse doesn't hold:

\begin{ex} Let $F$ be a Banach space.
The space $\ell^2(F)$ is $2$-smooth whenever $F$ is $2$-smooth \cite[Proposition 17]{[Fig]}. 
On the other hand, the norm of $\ell^2(F)$ is twice continuously Fr\'echet differentiable away from the origin
if and only if $F$ is a Hilbert space
\cite[Theorem 3.9]{[LeoSun]}. Thus, for $q\in (2,\infty)$,  $\ell^2(\ell^q)$ and $\ell^2(L^q(0,1))$ 
are examples of $2$-smooth Banach spaces which fail property $(C_p^2$) for all $p\in [2,\infty)$.
\end{ex}

To the best of our knowledge,
the general problem of proving the maximal estimate \eqref{eq:max} 
for $C_0$-contraction semigroups on $2$-smooth Banach space remains open. The present paper aims 
to fill this gap:

\begin{thm}\label{thm:main} 
Let $(e^{tA})_{t\ge 0}$ be a $C_0$-contraction semigroup on a $2$-smooth Banach space $E$,
let $(W_t)_{t\ge 0}$ be a cylindrical Brownian motion in a Hilbert space $H$, and
let $(g_t)_{t\ge 0}$ be a progressively measurable process in $\gamma(H,E)$.
If
$$\int_0^T \n  g_t\n _{\gamma(H,E)}^2 \,\d t < \infty \quad \P\hbox{-almost surely,}$$
then the stochastic convolution process 
$ X_t = \int_0^t e^{(t-s)A} g_s\, \d W_s$ is well-defined and has a continuous version.
Moreover, for all $0<p<\infty$ there exists a constant $C$, depending only 
on $p$ and $E$, such that 
$$\E \sup_{0\le t\le T} \n  X_t\n ^p 
\le C^p \E \Big(\int_0^T \n  g_t\n _{\gamma(H,E)}^2 \,\d t\Big)^\frac{p}{2}.
$$
\end{thm}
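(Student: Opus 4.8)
The plan is to follow the strategy announced in the abstract. The key analytic input is the regularity of $\psi(x) = \|x\|^p$: in a $2$-smooth Banach space $E$, for $p \ge 2$ the map $\psi$ is Fr\'echet differentiable with $\psi'(x) = p\|x\|^{p-1}\,x^*$ for a suitable duality map, and crucially $\psi'$ satisfies the Lipschitz-type estimate $\|\psi'(x) - \psi'(y)\| \le C(\|x\| + \|y\|)^{p-2}\|x-y\|$. I would first establish this estimate, deducing it from the defining property of $2$-smoothness (equivalently, from the fact that the norm has a modulus of smoothness of power type $2$), combined with elementary real-variable estimates for the scalar function $t \mapsto t^p$. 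This is where most of the genuine work lies, since once $\psi'$ is Lipschitz of this order, a second-order Taylor-type expansion with remainder controlled by $(\|x\| + \|y\|)^{p-2}\|x-y\|^2$ becomes available even though $\psi$ need not be twice differentiable.

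Next, assuming $p \ge 2$ and assuming additionally that $g$ is bounded and that $\int_0^T \|g_t\|_{\gamma(H,E)}^2\,\d t$ is uniformly bounded (a reduction achieved by a localization/stopping argument together with the known well-posedness results cited in the introduction), I would apply the It\^o formula to $\psi(X_t^\lambda)$, where $X^\lambda_t = \int_0^t e^{(t-s)A} g_s\,\d W_s$ is regularized by the Yosida approximants $A_\lambda = \lambda A(\lambda - A)^{-1}$ so that the convolution becomes a genuine It\^o process solving $\d X_t^\lambda = A_\lambda X_t^\lambda\,\d t + g_t\,\d W_t$. The contraction property of $(e^{tA})_{t\ge 0}$ — inherited by $(e^{tA_\lambda})_{t\ge 0}$ — ensures that the drift term $\langle \psi'(X_t^\lambda), A_\lambda X_t^\lambda\rangle$ has the favorable sign and can be discarded (this is exactly where contractivity is used). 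The It\^o correction term is then estimated by the second-order remainder bound, producing
\[
\E\|X_t^\lambda\|^p \le C\,\E \int_0^t (\E_s\|X_s^\lambda\|)^{p-2}\,\|g_s\|_{\gamma(H,E)}^2\,\d s,
\]
and after taking suprema and using the Burkholder--Davis--Gundy inequality on the martingale part of $\psi(X^\lambda)$ together with H\"older's inequality, one arrives at the desired estimate for $X^\lambda$ with a constant independent of $\lambda$; letting $\lambda \to \infty$ and removing the localization gives the result for $p \ge 2$.

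For $0 < p < 2$ I would invoke Lenglart's domination inequality: the maximal estimate for $p = 2$ (or indeed any fixed exponent $\ge 2$) shows that $\sup_{0\le t\le T}\|X_t\|^2$ is dominated, in Lenglart's sense, by the increasing process $A_T = C\int_0^T \|g_t\|_{\gamma(H,E)}^2\,\d t$, and Lenglart's inequality then automatically upgrades this to $\E\sup_{0\le t\le T}\|X_t\|^p \le C_p\, \E A_T^{p/2}$ for every $0 < p < 2$. Existence and continuity of a version of $X$ follow from the estimate applied to differences $X_t - X_s$ over the approximants together with a Kolmogorov-type or direct uniform-convergence argument.

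The main obstacle I anticipate is the first step: proving the Lipschitz estimate $\|\psi'(x) - \psi'(y)\| \le C(\|x\| + \|y\|)^{p-2}\|x-y\|$ with a constant depending only on $p$ and the smoothness constant of $E$, and handling the technical point that $\psi$ is only once (not twice) Fr\'echet differentiable, so that the It\^o formula must be applied in a form valid under a Lipschitz condition on the first derivative rather than continuity of the second. Establishing an It\^o formula at this level of regularity — or, alternatively, approximating $\psi$ by smooth functions while keeping uniform control of the relevant bounds — is the delicate part around which the rest of the argument is organized.
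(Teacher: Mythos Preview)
Your proposal is correct and follows essentially the same route as the paper: the Lipschitz estimate $\|\psi'(x)-\psi'(y)\|\le C(\|x\|+\|y\|)^{p-2}\|x-y\|$, an It\^o formula valid under that first-order regularity, the sign condition $\psi'(x)(Ax)\le 0$ from contractivity, the Burkholder/H\"older/Young absorption argument for the martingale and remainder terms, and Lenglart's inequality for $0<p<2$ are precisely the paper's ingredients. The only differences are cosmetic---the paper regularizes by replacing $g$ with $n(nI-A)^{-1}g\in\dom(A)$ rather than replacing $A$ by its Yosida approximation---and one point to watch: Lenglart domination requires the \emph{stopped} estimate $\E\|X_\tau\|^2\le C\,\E\int_0^\tau\|g_s\|_{\gamma(H,E)}^2\,\d s$ for every stopping time $\tau$, so (as the paper does) you should carry a stopping time through Steps~1--2 rather than deduce domination from the terminal-time maximal inequality alone.
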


For $p\ge 2$, the proof of Theorem \ref{thm:main} is based on a version of It\^o's formula 
(Theorem \ref{thm-main-1}) which 
exploits the fact (proved in Lemma \ref{mian-lem}) that in $2$-smooth Banach spaces the function
$\psi(x) = \n x\n^p$
is Fr\'echet differentiable and satisfies 
the Lipschitz estimate $$\n \psi'(x) - \psi'(y)\n \le C(\n x\n + \n y\n)^{p-2}\n x-y\n.$$
The extension to exponents $0<p<2$ is obtained by applying Lenglart's inequality 
(see \eqref{eq:Lenglart}).

We conclude this introduction with a brief discussion of some developments of the inequality
\eqref{eq:max} into different directions in the literature. 
Seidler \cite{[Sei]} has proved the inequality \eqref{eq:max} with optimal 
constant $C = O(\sqrt{p})$ as $p\to\infty$ 
for positive $C_0$-contraction semigroups on the ($2$-smooth) space $E = L^q(\mu)$,
$q\ge 2$. He also proved that the same result holds if the assumption 
`$e^{tA}$ is a positive contraction semigroup' is replaced by `$-A$ has a bounded $H^\infty$-calculus
of angle strictly less than $\frac12\pi$'. The latter result was subsequently extended
by Veraar and Weis \cite{[VerWei]} to arbitrary UMD spaces $E$ with type $2$. 
In the same paper, still under the assumption that $-A$ has a bounded $H^\infty$-calculus
of angle strictly less than $\frac12\pi$, the following stronger estimate is
obtained for UMD spaces $E$ with Pisier's property $(\alpha)$:
\begin{align}\label{eq:max-gamma} \E \sup_{0\le t\le T} \n  X_t\n ^p 
\le C^p  \E \n g\n _{\gamma(L^2(0,T;H),E)}^p
\end{align}
with a constant $C$ depending only on $p$ and $E$.
If, in addition, $E$ has type $2$, then the mapping $f\otimes (h\otimes x)\mapsto
(f\otimes h)\otimes x$ extends to a continuous embedding $L^2(0,T;\gamma(H,E))\hookrightarrow 
\gamma(L^2(0,T;H),E)$ and \eqref{eq:max-gamma} implies \eqref{eq:max}.

Let us finally mention that, for $p>2$, a weaker version of \eqref{eq:max} 
for arbitrary $C_0$-semigroups on Hilbert spaces has been obtained 
by Da Prato and Zabczyk \cite{[DZ]}. Using the factorisation method they proved that
$$ \E \sup_{0\le t\le T} \n  X_t\n ^p 
\le C^p \E \int_0^T \n  g_t\n _{\gamma(H,E)}^p \,\d t
$$
with a constant $C$ depending on $p$, $E$, and $T$.
The proof extends {\em verbatim} to $C_0$-semigroups on martingale type $2$ spaces. This 
relates to the above results for $2$-smooth spaces through a theorem 
of Pisier \cite[Theorem 3.1]{[P1]}, which states that a Banach space has 
martingale type $p$ if and only if it is $p$-smooth.

\section{The Fr\'echet derivative of $\n \cdot \n ^p$}\label{sec:smooth}

Let $1< q\le 2$.
A Banach space $E$ is {\em $q$-smooth} if the modulus of smoothness 
$$ \rho_{\n \cdot\n }(t) = \sup\Big\{\tfrac12 (\n  x+ty\n  + \n  x-ty\n ) -1\, :\ \n  x\n 
=\n  y\n  = 1 \Big\}$$
satisfies $\rho_{\n \cdot\n }(t)\leq C t^q$ for all $t>0$. 

It is known (see \cite[Theorem 3.1]{[P1]}) 
that $E$ is $q$-smooth if and only if there exists a constant $K\ge 1$ such that 
for all $x,y\in E$,
\begin{align}\label{eq:K}
	 \n  x+y\n ^q+\n  x-y\n ^q\leq 2\n  x\n ^q+K\n  y\n ^q.
\end{align}   

 \begin{lem}\label{mian-lem}
Let $E$
be a Banach space and let $1<q\le 2$ be given. For $p\ge q$ set $\psi_p(x):= \n x \n ^p$.
	\begin{enumerate}
\item $E$ is $q$-smooth if and only if the Fr\'echet derivative of $\psi_q$ is 
	globally $(q-1)$-H\"{o}lder continuous on $E$.    
\item If $E$ is $q$-smooth, then for $p>q$ the Fr\'echet derivative of $\psi_p$ is locally $(q-1)$-H\"{o}lder continuous on $E$.
\end{enumerate}
Moreover, for all $p\ge q$ and $x,y\in E$ we have
\begin{align}\label{Ap-lem-1-ep-1}
 \n \psi_p'(x) - \psi_p'(y)\n \le C(\n x\n +\n y\n )^{p-q}\n x-y\n ^{q-1},
\end{align} 
where $C$ depends only on $p$, $q$ and $E$.
\end{lem}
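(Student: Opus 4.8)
The plan is to prove the three assertions in tandem, working from the characterization \eqref{eq:K} and building up from the base case $p=q$. First I would establish that $\psi_q$ is Fr\'echet differentiable. For a general Banach space the norm need not be differentiable, but $q$-smoothness forces the modulus of smoothness to vanish like $t^q$, which by a classical argument (Day, or the duality between smoothness and convexity) gives that $\n\cdot\n$ is Fr\'echet differentiable away from the origin with $\n\cdot\n' = $ the unique norming functional; since $p=q>1$, the chain rule gives differentiability of $\psi_q$ at every point including $0$, with $\psi_q'(x) = q\n x\n^{q-1} x^*$ where $x^*\in E^*$ is norming for $x$ (and $\psi_q'(0)=0$). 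For part (1), the key is to turn the inequality \eqref{eq:K} into a one-sided estimate on $\psi_q$: replacing $x$ by $x$ and $y$ by $\frac12 h$ and rearranging, $q$-smoothness is equivalent to
\begin{align*}
\psi_q(x+h) + \psi_q(x-h) - 2\psi_q(x) \le C\n h\n^q
\end{align*}
for all $x,h$. Standard convex-analysis manipulations (the function $t\mapsto \psi_q(x+th)$ is convex, its one-sided derivatives exist and are monotone, and the second-difference bound controls how fast the derivative can change) convert this into the H\"older estimate $\n\psi_q'(x)-\psi_q'(y)\n \le C\n x-y\n^{q-1}$. Conversely, if $\psi_q'$ is globally $(q-1)$-H\"older, integrating along the segment from $x$ to $x\pm h$ recovers the second-difference bound, hence \eqref{eq:K}, hence $q$-smoothness. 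This is the part I expect to be the main obstacle: making the passage between the ``integrated'' second-difference inequality and the pointwise H\"older bound on the derivative rigorous in a possibly non-reflexive Banach space, where one cannot blithely differentiate under limits — one has to argue via the mean value inequality $\n\psi_q(b)-\psi_q(a) - \langle \psi_q'(a), b-a\rangle\n \le \sup_{t}\n\psi_q'(a+t(b-a)) - \psi_q'(a)\n\,\n b-a\n$ and choose test increments carefully.

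For part (2), with $p>q$ fixed, I would write $\psi_p(x) = \phi(\psi_q(x))$ with $\phi(s) = s^{p/q}$ on $[0,\infty)$, so that $\psi_p'(x) = \frac{p}{q}\n x\n^{p-q}\psi_q'(x)$. On any ball $\{\n x\n \le R\}$, $\phi'$ is bounded and Lipschitz, $\psi_q'$ is bounded (by $q R^{q-1}$) and globally $(q-1)$-H\"older by part (1), and $x\mapsto\n x\n^{p-q}$ is bounded and — since $p-q$ could be less than $1$ — at least $\min(1,p-q)$-H\"older, but in fact on a bounded set away from whatever, it is Lipschitz when $p-q\ge1$ and $(p-q)$-H\"older when $p-q<1$; combining via the product rule for H\"older functions ($\|fg(x)-fg(y)\|$ controlled by $\|f\|_\infty\|g(x)-g(y)\| + \|f(x)-f(y)\|\|g\|_\infty$) yields local H\"older continuity of $\psi_p'$ with exponent $\min(q-1, p-q)$. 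To get exactly exponent $q-1$ one uses that on bounded sets a $\beta$-H\"older function is also $\alpha$-H\"older for every $\alpha\le\beta$, so the dominant (worst) exponent $\min(q-1,p-q)$ can always be relaxed down — but wait, we want the \emph{larger} claim $q-1$; here one observes $q-1\le 1$ and the map $\n\cdot\n^{p-q}$ restricted to a bounded set is Lipschitz in the relevant regime or can be absorbed, so the binding exponent is genuinely $q-1$. I would just be careful to record that on bounded sets $(q-1)$-H\"older is the weakest (hence safest) claim consistent with all factors.

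Finally, for the global estimate \eqref{Ap-lem-1-ep-1}, the point is to track the $R$-dependence of the local constant and see it is exactly $(\n x\n + \n y\n)^{p-q}$. Set $R = \n x\n + \n y\n$. Using $\psi_p'(x) = \frac{p}{q}\n x\n^{p-q}\psi_q'(x)$, write
\begin{align*}
\psi_p'(x) - \psi_p'(y) = \tfrac{p}{q}\n x\n^{p-q}\big(\psi_q'(x)-\psi_q'(y)\big) + \tfrac{p}{q}\big(\n x\n^{p-q}-\n y\n^{p-q}\big)\psi_q'(y).
\end{align*}
The first term is bounded by $\frac pq R^{p-q}\cdot C\n x-y\n^{q-1}$ using part (1). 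For the second, $\big|\n x\n^{p-q}-\n y\n^{p-q}\big| \le C_{p,q} R^{p-q-(q-1)}\n x-y\n^{q-1}$ when $p-q\ge q-1$ (scalar H\"older estimate for $t\mapsto t^{p-q}$ with the appropriate homogeneity), and when $p-q<q-1$ one uses instead $\big|\n x\n^{p-q}-\n y\n^{p-q}\big|\le C\,\big|\,\n x\n-\n y\n\,\big|^{\,p-q}\le C\n x-y\n^{p-q}$ together with $\n x-y\n^{p-q} \le R^{p-q-(q-1)}\n x-y\n^{q-1}$ — no, that last inequality goes the wrong way unless $\n x-y\n \le R$, which does hold since $\n x-y\n \le \n x\n + \n y\n = R$. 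Combined with $\n\psi_q'(y)\n = q\n y\n^{q-1}\le qR^{q-1}$, the second term is also $\le C R^{p-q}\n x-y\n^{q-1}$. Adding the two gives \eqref{Ap-lem-1-ep-1} with $C$ depending only on $p$, $q$ and the smoothness constant $K$ of $E$. The case $p=q$ is just part (1) with $\n x\n^{p-q}=1$, so nothing extra is needed there. Throughout, the only genuinely nontrivial ingredient is part (1); the rest is bookkeeping with scalar power functions and the triangle inequality $\n x-y\n\le\n x\n+\n y\n$.
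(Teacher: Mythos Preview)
Your overall strategy is close to the paper's, but the quantitative bound \eqref{Ap-lem-1-ep-1} has a genuine gap in the regime $p-q<q-1$. After the split
\[
\psi_p'(x)-\psi_p'(y)=\tfrac{p}{q}\n x\n^{p-q}\big(\psi_q'(x)-\psi_q'(y)\big)+\tfrac{p}{q}\big(\n x\n^{p-q}-\n y\n^{p-q}\big)\psi_q'(y),
\]
you bound the second term using $\big|\n x\n^{p-q}-\n y\n^{p-q}\big|\le \n x-y\n^{p-q}$ and then assert $\n x-y\n^{p-q}\le R^{\,p-2q+1}\n x-y\n^{q-1}$ with $R=\n x\n+\n y\n$. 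But when $p-q<q-1$ the exponent $p-2q+1$ is negative, so $\n x-y\n\le R$ gives $\n x-y\n^{p-2q+1}\ge R^{\,p-2q+1}$, the \emph{reverse} of what you need; your self-correction (``that last inequality goes the wrong way unless $\n x-y\n\le R$, which does hold'') therefore does not rescue the step. The underlying issue is that bounding $\n\psi_q'(y)\n=q\n y\n^{q-1}$ by $qR^{q-1}$ discards precisely the factor required to compensate. (The estimate you want is still true, but it needs a sharper scalar inequality than the one you wrote.)

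The paper sidesteps this by a cleaner decomposition: it writes $\psi_p'(x)=p\n x\n^{p-1}f_x$ with $f_x$ the \emph{unit-norm} norming functional, so the two pieces are $p\n x\n^{p-1}\big(f_{x/\n x\n}-f_{y/\n y\n}\big)$ and $p\big(\n x\n^{p-1}-\n y\n^{p-1}\big)f_{y/\n y\n}$. Since $\n f_{y/\n y\n}\n=1$, the second piece is just $p\big|\n x\n^{p-1}-\n y\n^{p-1}\big|$, which the paper handles by a scalar case-split on $p\le2$ versus $p>2$ --- no stray $\n y\n^{q-1}$ to absorb. For the first piece the paper first proves $(q-1)$-H\"older continuity of $u\mapsto f_u$ on the unit sphere by an explicit convexity computation (following Deville--Godefroy--Zizler), and then controls $\big\n x/\n x\n-y/\n y\n\big\n$ by elementary manipulations. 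That sphere argument also furnishes the ``only if'' half of part (1), which you left as ``standard convex-analysis manipulations'' and flagged yourself as the main obstacle; the paper's route is concrete and avoids the difficulty you anticipated. Your factorisation through $\psi_q'$ can be repaired (for instance, arrange the split so the second term carries $\min(\n x\n,\n y\n)^{q-1}$ rather than $R^{q-1}$, or reduce to a one-variable inequality by homogeneity), but factoring out a unit-norm functional as the paper does is the cleaner move.
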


\begin{proof} 
If the Fr\'echet derivative of $\psi_q$ is $(q-1)$-H\"{o}lder continuous on $E$, 
then by the mean value theorem we can find $0\leq \theta,\rho\leq 1$ such that 
for all $x,y\in E$,
\begin{align*} 
\n x+y\n^q+\n x-y\n^q -2\n x\n^q &
 = (\n x+y\n^q - \n x\n^q) + (\n x-y\n^q - \n x\n^q)
\\ & \leq \n \psi_q'(x+\theta y)-\psi_q'(x-\rho y)\n\, \n y\n \\  
&\leq L\n (x+\theta y)-(x-\rho y)\n ^{q-1}\n y\n \leq 2^{q-1}L\n y\n ^q.
\end{align*}
Hence the Banach space $E$ is $q$-smooth.  

Suppose now that the norm of $E$ is $q$-smooth. Then for all $x,y\in E$ 
with $\n x\n ,\n y\n =1$ and all $t>0$ we have
\begin{align}\label{ep-lem-1}
         \n x+ty\n +\n x-ty\n -2\n x\n \leq K \n ty\n ^q.
\end{align}
Thus $$\lim_{t\rightarrow0}\frac{\n x+ty\n +\n x-ty\n -2\n x\n }{\n ty\n }=0,$$ 
which by \cite[Lemma I.1.3]{[Deville]} 
means that $\n \cdot \n $ is  Fr\'{e}chet differentiable on the unit sphere.
Hence, by homogeneity, $\n \cdot \n $ is Fr\'{e}chet 
differentiable on 
$E\backslash\{0\}$. Let us denote by $f_x$ its Fr\'{e}chet  derivative at the point $x\not=0$.

We begin by showing the $(q-1)$-H\"{o}lder continuity of $x\mapsto f_x$ on the unit sphere of $E$,
following the argument of \cite[Lemma V.3.5]{[Deville]}. We fix $x\not=y\in E$ such 
that $\n x\n ,\n y\n =1$ and $h\in E$ with $\n h\n =\n x-y\n $ and $x-y+h\not=0$. 
Since the norm $\n \cdot \n $ is a convex function, 
\begin{align*}            
f_y(x-y)\leq \n x\n -\n y\n. 
\end{align*}  
Similarly, we have
\begin{align*}
         f_x(h)&\leq \n x+h\n -\n x\n, \qquad 
          f_y(y-x-h)\leq \n 2y-x-h\n -\n y\n .
\end{align*} 
By using above inequalities and the linearity of the function $f_x$, we have
\begin{align*}
f_x(h)-f_y(h)\leq \n x+h\n -\n x\n -f_y(h)&=\n x+h\n -\n y\n -f_y(x+h-y)+\n y\n -\n x\n +f_y(x-y)\\
& \leq \n x+h\n -\n y\n -f_y(x+h-y)\\
&=\n x+h\n -\n y\n +f_y(y-x-h)\\
&\leq \n x+h\n +\n 2y-x-h\n -2\n y\n \\
&=\Big\n y+\n x+h-y\n \cdot\frac{x+h-y}{\n x+h-y\n }\Big\n
\\ & \qquad  +\Big\n y-\n x+h-y\n \cdot
\frac{x+h-y}{\n x+h-y\n }\Big\n-2\n y\n
\\ &\leq K\n x+h-y\n ^q  \leq K(\n x-y\n +\n h\n )^q = 2^qK\n x-y\n ^q,
\end{align*}                                         
where we also used \eqref{ep-lem-1}.
Since the roles of $x$ and $y$ may be reversed in this inequality, this implies 
\begin{align*}
   \n f_x-f_y\n =\sup_{\n h\n =\n x-y\n }\frac{|f_x(h)-f_y(h)|}{\n x-y\n } \leq 2^qK\n x-y\n ^{q-1}
\end{align*}                                                                 
This proves the $(q-1)$-H\"{o}lder continuity of the norm $\n \cdot \n $ on 
the unit sphere. 

We proceed with the proof of \eqref{Ap-lem-1-ep-1}; the $(q-1)$-H\"older continuity
of $\psi_q$ as well as the local $(p-1)$-H\"older continuity
of $\psi_p$ follow from it. For all $x,y\in E$ with $x\neq 0$ 
and $y\neq 0$ we have $\psi_p'(x)=p\n x\n ^{p-1}f_x$. 

It is easy to check that $f_x=f_{\frac{x}{\n x\n }}$ and 
$\n f_x\n =1$. 
Following once more the argument of \cite[Lemma V.3.5]{[Deville]}, this gives
\begin{equation}
 \label{eq:holder-p}
\begin{aligned}
        \n \psi_p'(x)-\psi_p'(y)\n &=p\big\n \n x\n ^{p-1}f_x-\n y\n ^{p-1}
f_y\big\n \\
& \leq p\Big\n \n x\n ^{p-1}(f_{\frac{x}{\n x\n }}-f_{\frac{y}{\n y\n }})
\Big\n +p\Big\n (\n x\n ^{p-1}-\n y\n ^{p-1}) f_{\frac{y}{\n y\n }}\Big\n\\ 
& \leq p2^q K\n x\n ^{p-1}\Big\n \frac{x}{\n x\n }-\frac{y}{\n y\n }\Big\n ^{q-1}
+p\Big| \n x\n ^{p-1}-\n y\n ^{p-1}\Big| \\
& \leq p2^qK \n x\n ^{p-q} \n y\n ^{1-q}\Big\n  x\n y\n -y\n x\n \Big\n ^{q-1}
+p\Big| \n x\n ^{p-1}-\n y\n ^{p-1}\Big| \\
& = p2^q K \n x\n ^{p-q}\n y\n ^{1-q}\Big\n \n y\n (x-y)+y(\n y\n -\n x\n )\Big\n ^{q-1} + 
p\Big| \n x\n ^{p-1}-\n y\n ^{p-1}\Big| \\
& \le p2^q K \n x\n ^{p-q}\n y\n ^{1-q}(2\n y\n \n x-y\n) ^{q-1} + 
p\Big| \n x\n ^{p-1}-\n y\n ^{p-1}\Big| \\
& = p2^{2q-1}K\n x\n ^{p-q}\n x-y\n ^{q-1} 
+p\Big| \n x\n ^{p-1}-\n y\n ^{p-1}\Big|. 
\end{aligned}
\end{equation} 
If $q\le p\leq 2$, then by the inequality $|t^r-s^r|\leq |t-s|^r$, 
valid for $0<r\leq 1$ and $s,t\in[0,\infty)$, we have
\begin{align*}
         \big|\n x\n ^{p-1}-\n y\n ^{p-1}\big|\leq \big|\n x\n -\n y\n \big|^{p-1}
\leq \n x-y\n ^{p-1}
\leq (\n x\n +\n y\n )^{p-q}\n x-y\n ^{q-1}.
\end{align*} 
If $p>2$, by applying the mean value theorem, for some $\theta\in [0,1]$ we have 
\begin{align*}
    \big|\n x\n ^{p-1}-\n y\n ^{p-1}\big|&=(p-1)\Big\n \n \theta x+(1-\theta)y\n ^{p-2}
f_{\theta x+(1-\theta )y}(x-y)\Big\n \\
&\leq (p-1)(\n x\n +\n y\n )^{p-2}\n x-y\n \\
&\leq (p-1)(\n x\n +\n y\n )^{p-2}(\n x\n +\n y\n )^{2-q}\n x-y\n ^{q-1}
\\ &=(p-1)(\n x\n +\n y\n )^{p-q}\n x-y\n ^{q-1}.
\end{align*} 
Also, since $\psi_p'(0)=0$, for $y\not=0$ we have
\begin{align*}
         \n \psi_p'(0)-\psi_p'(y)\n =p\n y\n ^{p-1} = p\n y\n ^{p-1}\Big\n \frac{y}{\n y\n} \Big\n ^{p-1}
\le p\n y\n ^{p-1}\Big\n \frac{y}{\n y\n} \Big\n ^{q-1} =  p\n y\n ^{p-q}\n y\n ^{q-1}.
\end{align*}               
\end{proof}
 
The above lemma will be combined with the next one, which gives a first order 
Taylor formula with 
a remainder term involving the first derivative only.  

\begin{lem}\label{lem:Taylor}
  Let $E$ and $F$ be Banach spaces, let $0<\alpha\le 1$,  and  let $\psi:E\rightarrow F$ 
be a Fr\'{e}chet differentiable function whose Fr\'{e}chet  
derivative $\psi': E\rightarrow \mathscr{L}(E,F)$ is locally 
$\alpha$-H\"{o}lder continuous.
 Then for all $x,y\in E$ we have
  \begin{align*}
         \psi(y)=\psi(x)+\psi'(x)(y-x)+R(x,y),       
  \end{align*} 
where
  \begin{align}\label{eq:Taylor}
   R(x,y)=\int_0^1(\psi'(x+r(y-x))(y-x)-\psi'(x)(y-x)) \,\d r.
    \end{align}
\end{lem}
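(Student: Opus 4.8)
The plan is to reduce the statement to the classical fundamental theorem of calculus for the real-valued function obtained by composing $\psi$ with a line segment and a fixed functional. First I would fix $x,y\in E$ and define the path $\gamma:[0,1]\to E$ by $\gamma(r)=x+r(y-x)$, so that $\gamma(0)=x$ and $\gamma(1)=y$. Then I would fix an arbitrary $\phi\in F^*$ and consider the scalar function $u:[0,1]\to\R$ given by $u(r)=\langle \psi(\gamma(r)),\phi\rangle$. The chain rule for Fr\'echet differentiable maps shows that $u$ is differentiable with $u'(r)=\langle \psi'(\gamma(r))(y-x),\phi\rangle$; here the local $\alpha$-H\"older continuity of $\psi'$ along the compact segment $\gamma([0,1])$ guarantees that $r\mapsto \psi'(\gamma(r))(y-x)$ is continuous (indeed $\alpha$-H\"older) as an $F$-valued map, hence $u'$ is continuous on $[0,1]$.

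Next I would apply the one-dimensional fundamental theorem of calculus to $u$ on $[0,1]$ to obtain
\begin{align*}
\langle \psi(y)-\psi(x),\phi\rangle = u(1)-u(0) = \int_0^1 u'(r)\,\d r = \Big\langle \int_0^1 \psi'(\gamma(r))(y-x)\,\d r,\,\phi\Big\rangle,
\end{align*}
where the last equality uses that the Bochner integral commutes with the bounded functional $\phi$, the integrand being continuous and hence Bochner integrable on $[0,1]$. Since $\phi\in F^*$ was arbitrary and $F^*$ separates points of $F$, this yields $\psi(y)-\psi(x)=\int_0^1 \psi'(x+r(y-x))(y-x)\,\d r$ as an identity in $F$. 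Subtracting and adding the constant term $\psi'(x)(y-x)=\int_0^1\psi'(x)(y-x)\,\d r$ then gives exactly $\psi(y)=\psi(x)+\psi'(x)(y-x)+R(x,y)$ with $R(x,y)$ as in \eqref{eq:Taylor}.

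The only genuinely delicate point is the justification that $r\mapsto\psi'(\gamma(r))(y-x)$ is (Bochner) integrable and that the chain rule applies so as to make $u$ continuously differentiable: this is where the local $\alpha$-H\"older continuity of $\psi'$ is used, since it provides continuity of $\psi'\circ\gamma$ on the compact set $\gamma([0,1])\subset E$, after which boundedness and hence integrability on $[0,1]$ are automatic. Everything else is the standard scalarization argument plus the classical mean value / fundamental theorem of calculus in one real variable, so no further obstacles are expected.
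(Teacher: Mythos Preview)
Your proof is correct and follows essentially the same route as the paper's: both scalarize via an arbitrary functional in $F^*$, reduce to a one-variable function along the segment from $x$ to $y$, apply the fundamental theorem of calculus (the paper phrases this as Taylor's formula with integral remainder, citing \cite{[Ana]}), and then use that $F^*$ separates points. The only cosmetic difference is that the paper parametrizes by $\theta\in\R$ with a unit direction $w=(y-x)/\n y-x\n$ and invokes an external reference, whereas you parametrize directly by $r\in[0,1]$ and appeal to the fundamental theorem of calculus; your version is slightly more self-contained but otherwise identical in substance.
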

\begin{proof}
     Pick $w\in E$ such that $\n w\n \leq 1$ and consider the function $f:\R\to F$ by
     \begin{align*}
            f(\theta) := \psi(x+\theta w).
     \end{align*}
For all $x^*\in F^*$, $\langle f', x^*\rangle $ is locally 
$\alpha$-H\"{o}lder continuous. To see this, note 
 that for $|\theta_1|,|\theta_2|\leq R$ and $\n  x\n \leq R$ we have  
$\n  x+\theta_1 w\n ,\n  x+\theta_2 w\n \leq 2R$, so by assumption there exists a constant 
$C_{2R}$ such that
     \begin{align*}
            |\langle f'(\theta_1)-f'(\theta_2), x^*\rangle| &=
| \langle\psi'(x+\theta_1w)w,x^*\rangle -\langle\psi'(x+\theta_2w)w,x^*\rangle |\\
            &\leq \n \psi'(x+\theta_1w)-\psi'(x+\theta_2w)\n \,\n  x^*\n  
             \leq C_{2R} |\theta_1-\theta_2|^\alpha\n  x^*\n .
            \end{align*}
Applying Taylor's formula and \cite[Lemma 1, Theorem 3]{[Ana]} to the 
function $\langle f,x^*\rangle $
we obtain
     \begin{align*}
          \langle f(t)-f(0),x^*\rangle = t\langle f'(0),x^*\rangle +\langle R_f(0,t), x^*\rangle ,
     \end{align*}
     where  $R_f(0,t)=\int_0^1 t(f'(rt)-f'(0))\,\d r$.
Now let $x,y\in E$ be given and set 
$t=\n y-x\n $ and $w=\frac{y-x}{\n  y-x\n }$.  
With these choices we obtain
        \begin{align*}
 \langle \psi(y),x^* \rangle- \langle \psi(x),x^*\rangle -
\langle \psi'(x)(y-x),x^*\rangle
&= \langle \psi(x+tw),x^* \rangle- 
\langle \psi(x),x^*\rangle - t\langle \psi('x)w,x^*\rangle   \\
          &= \langle  f(t)-f(0) -t f'(0), x^*\rangle \\
 & = \int_0^1 t \langle f'(rt)-f'(0),x^*\rangle \,\d r
\\ & = \int_0^1 \langle \psi'(x+r (y-x))(y-x)-\psi'(x)(y-x),x^*\rangle 
\,\d r.
    \end{align*} 
Since $x^*\in F^*$ was arbitrary, this proves the lemma.
    \end{proof}
 
\section{An It\^o formula for $\n \cdot \n ^p$}

From now on we shall always assume that $E$ is a $2$-smooth Banach space. We fix $T\ge 0$ and 
let $(\Omega,\F,\P)$ be a probability space with a filtration $(\F_t)_{t\in [0,T]}$. Let 
$H$ be a real Hilbert space, and denote by $\gamma(H,E)$ the Banach space of all
$\gamma$-radonifying operators from $H$ to $E$. 
We denote by $M([0,T];\gamma(H,E))$ the space of all progressively measurable
 processes $\xi:[0,T]\times\Omega\rightarrow \gamma(H,E)$ such that 
 \begin{align*}
 \int_0^T\n \xi_t\n ^2_{\gamma(H,E)}\,\d t<\infty \ \ \hbox{$\P$-almost surely}.
 \end{align*}
The space of all such $\xi$ which satisfy 
$$ \E \Big(\int_0^T\n \xi_t\n ^2_{\gamma(H,E)}\,\d t\Big)^\frac{p}{2}<\infty $$
is denoted by $M^p([0,T];\gamma(H,E))$, $0<p<\infty$.

On $(\Omega,\F,\P)$, let 
$(W_t)_{t\in [0,T]}$ be an $(\F_t)_{t\in [0,T]}$-cylindrical Brownian motion in $H$.  
For adapted simple processes $\xi\in M([0,T];\gamma(H,E))$
of the form 
 \begin{align*} 
	    \xi_t=\sum_{i=0}^{n-1}1_{(t_i,t_{i+1}]}(t)\otimes A_i,
\end{align*} 
where $\Pi=\{0=t_0<t_1<\cdots<t_n=T\}$ is a partition of the interval $[0,T]$
and the 
random variables $A_i$ are $\mathcal{F}_{t_i}$-measurable and take values in  
the space of all finite rank operators from $H$ to $E$, we define 
the random variable $ I( \xi)\in L^0(\Omega,\F_T;E)$ by
\begin{align*} 
I(\xi) := \sum_{i=0}^{n-1}A_{i}(W_{t_{i+1}}-W_{t_i})
\end{align*}
where $(h\otimes x)W_t := (W_t h)\otimes x$. 
It is well known that 
$$\E \n I(\xi)\n^2 \le C^2 \E \int_0^T \n \xi_t\n_{\gamma(H,E)}^2\,\d t,$$
where $C$ depends on $p$ and $E$ only. It follows that 
$I$ has a unique extension to a bounded linear operator
$M^2([0,T];\gamma(H,E))$ to $L^2(\Omega,\F_T;E)$. 
By a standard localisation argument, $I$ extends
continuous linear operator
from $M([0,T];\gamma(H,E))$ to $L^0(\Omega,\F_T;E)$. In what follows we write
\begin{align*}
  \int_0^t\xi_s \,\d W_s := I (1_{(0,t]}\xi), \quad t\in [0,T].
\end{align*} 
This stochastic integral has the following properties:
\begin{enumerate}
 \item
For all $\xi\in M([0,T];\gamma(H,E))$ the process $t\rightarrow \int_0^t\xi_s \,\d W_s$ 
is an $E$-valued continuous local martingale, which is a martingale if $\xi\in M^2([0,T];\gamma(H,E))$. 
\item For all $\xi\in M([0,T];\gamma(H,E))$ and stopping times $\tau$ with values in $[0,T]$,
\begin{align}\label{eq:stopping}
\int_0^\tau \xi_t\,\d W_t = \int_0^T 1_{[0,\tau]}(t)\xi_t \,\d W_t \quad \hbox{$\P$-almost surely}.
\end{align} 
\item For all $\xi\in M^2([0,T];\gamma(H,E))$ and $0\le u<t\le T$, 
 \begin{align}\label{Iso-ine}
     \E\Big(\Big\n \int_u^t\xi_s\,\d W_s\Big\n ^2|\F_u\Big)\leq
C\E\Big(\int_u^t\n \xi_s\n ^2_{\gamma(H,E)}\, \d s\,|\F_u\Big).
 \end{align} 
\item (Burkholder's inequality \cite{[Ass], [Dett]}) 
For all $0<p<\infty$ there exists a constant $C$, depending only on $p$ and $E$, such
that for all $\xi\in M^p([0,T];\gamma(H,E))$ and $t\in [0,T]$,
 \begin{align}\label{burk-ine}
    \E\sup_{s\in[0,t]}\Big\n \int_0^s\xi_u\,\d W_u\Big\n ^p\leq
C\E\Big(\int_0^t\n \xi_s\n ^2_{\gamma(H,E)}\,\d s\Big)^{\frac{p}{2}}.
 \end{align}
\end{enumerate}
   
An excellent survey of the theory of stochastic integration in $2$-smooth Banach spaces with 
complete proofs
is given in Ondrej\'at's thesis \cite{[Ondrejat]}, where also further references to the literature 
can be found.

In what follows we fix $p\ge 2$ and set $\psi(x):= \psi_p(x) = \n x\n ^p.$  
Since we assume that $E$ is $2$-smooth, this function is Fr\'echet differentiable.       
Following the notation of Lemma \ref{lem:Taylor} we set
$$ 
   R_\psi(x,y):=\int_0^1
(\psi'(x+r(y-x))(y-x)-\psi'(x)(y-x)) \,\d r.
$$ 
We have the following version of It\^o's formula.

\begin{thm}[It\^o formula]\label{thm-main-1} Let $E$ be a $2$-smooth Banach space 
and let $2\le p<\infty$.
Let $(a_t)_{t\in [0,T]}$ be an $E$-valued progressively measurable process
such that $$\E \Big(\int_0^T \n a_t\n \,\d t\Big)^p<\infty$$
and let 
$(g_t)_{t\in [0,T]}$ be a process in $M^p([0,T];\gamma(H,E))$.
Fix $x\in E$ and let $(X_t)_{t\in [0,T]}$ be given by
    \begin{align*}
   X_t=x+\int_0^ta_s\,\d s+\int_0^tg_s \,\d W_s.
\end{align*}
 The process  $s\mapsto \psi'(X_s) g_s$ is progressively measurable and belongs to
$M^1([0,T];H)$, and for all $t\in [0,T]$ we have
 \begin{align}\label{eq:Ito}
\psi(X_t)=\psi(x)+\int_0^t\psi'(X_s)(a_s)\,\d s+\int_0^t\psi'(X_s)(g_s)\,\d W_s+
\lim_{n\rightarrow\infty}\sum_{i=0}^{m(n)-1} R_\psi(X_{t_{i}^n\wedge
t},X_{t_{i+1}^n\wedge t})
\end{align}
with convergence in probability, 
for any sequence of partitions $\Pi_n=\{0=t_0^n<t_1^n<\cdots<t_{m(n)}^n=T\}$ whose meshes 
$\n \Pi_n\n :=\max_{0\leq i\leq m(n)-1}|t_{i+1}^n-t_i^n|$ 
tend to $0$ as $n\to \infty$. Moreover, there exists a constant $C$ and, for each $\e>0$, a constant 
$C_{\e}$, both independent of 
$a$ and $g$, such that
\begin{align}\label{eq:remainder}
  \E\liminf_{n\rightarrow\infty}\sum_{i=0}^{m(n)-1}
|R_\psi(X_{t_{i}^n\wedge t},X_{t_{i+1}^n\wedge t})| 
\le
\varepsilon C
\E \sup_{s\in [0,t]}\n X_s\n ^p
+C_{\e}
\E\Big(\int_0^t\n g_s\n_{\gamma(H,E)} ^2\,\d s\Big)^{\frac{p}{2}}.
\end{align}
\end{thm}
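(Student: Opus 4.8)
The plan is to prove the It\^o formula \eqref{eq:Ito} and the remainder estimate \eqref{eq:remainder} by a standard approximation scheme, combined with the Taylor formula of Lemma~\ref{lem:Taylor} and the H\"older estimate \eqref{Ap-lem-1-ep-1} from Lemma~\ref{mian-lem} (with $q=2$, so that $\psi'$ is locally Lipschitz and $\n \psi'(x)-\psi'(y)\n \le C(\n x\n +\n y\n )^{p-2}\n x-y\n $). First I would reduce to the case $x=0$ by translation and, by a stopping-time argument using \eqref{eq:stopping}, reduce to processes $a,g$ that are uniformly bounded; the general case then follows by localisation, taking $\tau_N=\inf\{t:\ \n X_t\n \vee \int_0^t\n a_s\n\,\d s\vee \int_0^t\n g_s\n^2\,\d s>N\}$, and passing to the limit in probability (the integrability hypotheses on $a$ and $g$, together with Burkholder's inequality \eqref{burk-ine}, guarantee the relevant uniform integrability). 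The first measurability/integrability claim—that $s\mapsto \psi'(X_s)g_s$ lies in $M^1([0,T];H)$—follows from $\n \psi'(X_s)g_s\n_H \le p\n X_s\n^{p-1}\n g_s\n_{\gamma(H,E)}$ together with H\"older's inequality in the exponents $\tfrac{p}{p-1}$ and $p$, using $\E\sup_s\n X_s\n^p<\infty$.

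Next I would fix a partition $\Pi_n$ and apply Lemma~\ref{lem:Taylor} on each subinterval to write, telescoping,
\begin{align*}
\psi(X_{t})-\psi(0)
=\sum_{i}\bigl(\psi(X_{t_{i+1}^n\wedge t})-\psi(X_{t_i^n\wedge t})\bigr)
=\sum_i \psi'(X_{t_i^n\wedge t})(X_{t_{i+1}^n\wedge t}-X_{t_i^n\wedge t})
+\sum_i R_\psi(X_{t_i^n\wedge t},X_{t_{i+1}^n\wedge t}).
\end{align*}
Writing $X_{t_{i+1}^n\wedge t}-X_{t_i^n\wedge t}=\int_{t_i^n\wedge t}^{t_{i+1}^n\wedge t}a_s\,\d s+\int_{t_i^n\wedge t}^{t_{i+1}^n\wedge t}g_s\,\d W_s$, the first sum becomes $\sum_i\psi'(X_{t_i^n\wedge t})\int_{t_i^n\wedge t}^{t_{i+1}^n\wedge t}a_s\,\d s+\sum_i\psi'(X_{t_i^n\wedge t})\int_{t_i^n\wedge t}^{t_{i+1}^n\wedge t}g_s\,\d W_s$, which are Riemann–It\^o sums for the first two integrals on the right-hand side of \eqref{eq:Ito}. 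Their convergence in probability to $\int_0^t\psi'(X_s)(a_s)\,\d s$ and $\int_0^t\psi'(X_s)(g_s)\,\d W_s$ follows from path-continuity of $s\mapsto \psi'(X_s)$ (a consequence of local H\"older continuity of $\psi'$ and continuity of $X$), dominated convergence for the drift term, and the It\^o isometry \eqref{Iso-ine} applied to the step-process approximants $\sum_i 1_{(t_i^n,t_{i+1}^n]}\psi'(X_{t_i^n})g$ for the stochastic term. Consequently the remainder sums $\sum_i R_\psi(X_{t_i^n\wedge t},X_{t_{i+1}^n\wedge t})$ converge in probability to the difference of $\psi(X_t)-\psi(0)$ and the two integrals, which gives \eqref{eq:Ito}.

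The main work is the remainder estimate \eqref{eq:remainder}. From the definition of $R_\psi$ and \eqref{Ap-lem-1-ep-1} with $q=2$,
\begin{align*}
|R_\psi(X_{t_i^n\wedge t},X_{t_{i+1}^n\wedge t})|
\le \int_0^1 \bigl\n \psi'(X_{t_i^n\wedge t}+r\Delta_i)-\psi'(X_{t_i^n\wedge t})\bigr\n\,\n\Delta_i\n\,\d r
\le C\,\sup_{s\in[0,t]}\n X_s\n^{p-2}\,\n\Delta_i\n^2,
\end{align*}
where $\Delta_i:=X_{t_{i+1}^n\wedge t}-X_{t_i^n\wedge t}$. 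Splitting $\Delta_i$ into drift and martingale parts and using $\n\Delta_i\n^2\le 2\n\int_{t_i^n\wedge t}^{t_{i+1}^n\wedge t}a_s\,\d s\n^2+2\n\int_{t_i^n\wedge t}^{t_{i+1}^n\wedge t}g_s\,\d W_s\n^2$, the drift contribution to $\sum_i\n\Delta_i\n^2$ is bounded by $\n\Pi_n\n\int_0^t\n a_s\n\,\d s\cdot(\int_0^t\n a_s\n\,\d s)$, which tends to $0$; so only the martingale part survives the $\liminf$. For that part I would take conditional expectations, using \eqref{Iso-ine} to get $\E\sum_i\n\int_{t_i^n\wedge t}^{t_{i+1}^n\wedge t}g_s\,\d W_s\n^2 \le C\E\int_0^t\n g_s\n_{\gamma(H,E)}^2\,\d s$ uniformly in $n$; combined with H\"older's inequality in the exponents $\tfrac{p}{p-2}$ and $\tfrac{p}{2}$ to separate $\sup_s\n X_s\n^{p-2}$ from $\sum_i\n\Delta_i^{\mathrm{mart}}\n^2$, and finally Young's inequality $ab\le \e a^{p/(p-2)}+C_\e b^{p/2}$, we arrive at \eqref{eq:remainder}. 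The delicate point—and the step I expect to be the main obstacle—is to carry out this conditioning and the $\liminf$ interchange rigorously when only \eqref{Iso-ine} (a one-step conditional isometry) is available rather than a two-sided one: one must bound $\E\liminf_n$ by $\liminf_n\E$ via Fatou, and control the cross term between the drift increments and martingale increments, for which I would again use Young's inequality and the a~priori bound $\E\sup_s\n X_s\n^p<\infty$ established in the reduction step. Care is also needed because $\psi'$ is only \emph{locally} (not globally) H\"older, so the constant $C$ in the pointwise bound for $R_\psi$ must be extracted in the scale-invariant form \eqref{Ap-lem-1-ep-1}, which is exactly why Lemma~\ref{mian-lem} is stated with the factor $(\n x\n+\n y\n)^{p-2}$.
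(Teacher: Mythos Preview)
Your outline for the identity \eqref{eq:Ito} is essentially the paper's argument: telescope via Lemma~\ref{lem:Taylor}, show the first--order sums converge to the drift and stochastic integrals using path continuity of $\psi'(X_\cdot)$, and deduce convergence of the remainder sums. (The preliminary localisation you propose is harmless but unnecessary; the paper works directly under the stated integrability hypotheses.)

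The gap is in your treatment of the remainder estimate \eqref{eq:remainder}. You bound
\[
|R_\psi(X_{t_i^n\wedge t},X_{t_{i+1}^n\wedge t})|\le C\sup_{s\in[0,t]}\n X_s\n^{p-2}\,\n \Delta_i\n^2
\]
and then propose to separate the two factors by H\"older (or Young) with exponents $\tfrac{p}{p-2}$ and $\tfrac{p}{2}$. That forces you to control
$\E\bigl(\sum_i\n \Delta_i^{\mathrm{mart}}\n^2\bigr)^{p/2}$,
the $L^{p/2}$--norm of the discrete quadratic sum, whereas the conditional inequality \eqref{Iso-ine} only gives you the $L^1$--bound $\E\sum_i\n \Delta_i^{\mathrm{mart}}\n^2\le C\,\E\int_0^t\n g_s\n^2\,\d s$. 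For $p>2$ this is not enough, and in a general $2$-smooth space no convergence of $\sum_i\n \Delta_i^{\mathrm{mart}}\n^2$ to $\int_0^t\n g_s\n_{\gamma(H,E)}^2\,\d s$ is available to rescue the argument. The problem is created precisely by replacing $\n X_{t_i^n}\n^{p-2}$ with $\sup_s\n X_s\n^{p-2}$: the latter is not $\F_{t_i^n}$--measurable, so the tower property can no longer be exploited.

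The paper avoids this by using the sharper pointwise bound
\[
|R_\psi(X_{t_i^n},X_{t_{i+1}^n})|\le C\,\n X_{t_i^n}\n^{p-2}\,\n \Delta_i\n^2+C\,\n \Delta_i\n^p,
\]
which follows from \eqref{Ap-lem-1-ep-1} after expanding $(\n X_{t_i^n}\n+\n \Delta_i\n)^{p-2}$. The first term now has an $\F_{t_i^n}$--measurable prefactor, so one can condition on $\F_{t_i^n}$ and apply \eqref{Iso-ine} \emph{inside} the expectation:
\[
\E\bigl[\n X_{t_i^n}\n^{p-2}\n \Delta_i^{\mathrm{mart}}\n^2\bigr]
=\E\Bigl[\n X_{t_i^n}\n^{p-2}\,\E\bigl(\n \Delta_i^{\mathrm{mart}}\n^2\,\big|\,\F_{t_i^n}\bigr)\Bigr]
\le C\,\E\Bigl[\n X_{t_i^n}\n^{p-2}\!\int_{t_i^n}^{t_{i+1}^n}\n g_s\n^2\,\d s\Bigr].
\]
Summing over $i$, bounding $\n X_{t_i^n}\n^{p-2}\le\sup_s\n X_s\n^{p-2}$ \emph{after} the integral has been assembled into $\int_0^t\n g_s\n^2\,\d s$, and only then applying Young's inequality with exponents $\tfrac{p}{p-2}$, $\tfrac{p}{2}$ gives exactly \eqref{eq:remainder}. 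The extra term $\sum_i\n \Delta_i\n^p$ is handled separately by Burkholder's inequality \eqref{burk-ine} and the subadditivity $\sum_i a_i^{p/2}\le(\sum_i a_i)^{p/2}$. In short: keep the prefactor adapted, condition first, and postpone H\"older/Young until the $g$--integral is already in the form $\int_0^t\n g_s\n^2\,\d s$.
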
   

The proof shows that we may take $C_\e = C'(\e^{1-\frac2p} +1)$ for some constant
$C'$ independent of 
$a$, $g$, and $\e$.

Before we start the proof of the theorem we state some lemmas.
The first is an immediate consequence of Burkholder's inequality \eqref{burk-ine}.

\begin{lem}\label{lem:0}
Under the assumptions of Theorem \ref{thm-main-1}
we have 
$$ \E \sup_{0\le t\le T} \n X_t\n ^p \le C \E \Big(\int_0^T \n a_s\n \,\d s\Big)^p
+ C \E \Big(\int_0^T \n g_s\n_{\gamma(H,E)} ^2\,\d s\Big)^\frac{p}{2}.$$
\end{lem}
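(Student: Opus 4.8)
The plan is to obtain this estimate directly from the triangle inequality and Burkholder's inequality \eqref{burk-ine}, without using the It\^o formula at all; indeed this is precisely the a priori bound that makes the right-hand side of \eqref{eq:remainder} meaningful, which is why it is an ``immediate consequence'' of \eqref{burk-ine}.

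First I would split $X_t$ by means of its defining equation: for every $t\in[0,T]$,
\[
\n X_t\n \le \n x\n + \int_0^T\n a_s\n \,\d s + \sup_{0\le s\le T}\Big\n \int_0^s g_u\,\d W_u\Big\n ,
\]
where I used that $r\mapsto\int_0^r\n a_s\n \,\d s$ is nondecreasing to replace the running drift integral by its value at $T$, and enlarged the stochastic integral to its running supremum. Taking $\sup_{t\in[0,T]}$ on the left, raising to the power $p$, using the elementary inequality $(c_1+c_2+c_3)^p\le 3^{p-1}(c_1^p+c_2^p+c_3^p)$ valid for $c_i\ge 0$, and then taking expectations, the claim is reduced to estimating $\E\sup_{0\le s\le T}\n \int_0^s g_u\,\d W_u\n ^p$.

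For that term I would simply apply Burkholder's inequality \eqref{burk-ine} with $t=T$, which is legitimate since $g\in M^p([0,T];\gamma(H,E))$; it yields the bound $C\,\E\big(\int_0^T\n g_s\n _{\gamma(H,E)}^2\,\d s\big)^{p/2}$. Collecting the three contributions and absorbing the numerical factors into a single constant $C$ gives the asserted inequality. (The fixed starting point contributes an extra summand $C\n x\n ^p$, which vanishes in every application of this lemma, where $x=0$.)

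I do not expect any obstacle here: the only two things to verify are that the right-hand side of \eqref{burk-ine} is finite, which is exactly the standing hypothesis $g\in M^p([0,T];\gamma(H,E))$, and that $s\mapsto\int_0^s g_u\,\d W_u$ admits a continuous version, so that its pathwise supremum is a genuine random variable to which \eqref{burk-ine} applies verbatim --- both of which are among the properties of the stochastic integral recorded above.
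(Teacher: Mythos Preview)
Your argument is correct and is exactly what the paper has in mind: it states only that the lemma ``is an immediate consequence of Burkholder's inequality \eqref{burk-ine}'' and gives no further details, so your triangle-inequality splitting followed by \eqref{burk-ine} is precisely the intended proof. Your observation about the extra summand $C\n x\n^p$ is a valid point --- the paper's stated inequality silently drops it --- and, as you note, this is harmless since in every use of the lemma one has $x=0$.
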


\begin{lem}\label{lem:1a}
Under the assumptions of Theorem \ref{thm-main-1}, 
the process $t\mapsto \psi'(X_t) (g_t)$ is progressively measurable and 
belongs to $M^1([0,T];H)$.
\end{lem}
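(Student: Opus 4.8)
The plan is to establish the two claimed properties of the process $t\mapsto \psi'(X_t)(g_t)$ separately: first progressive measurability, then the integrability bound placing it in $M^1([0,T];H)$. For progressive measurability, I would argue that $X$ is itself progressively measurable (being the sum of $x$, a Lebesgue integral of the progressively measurable process $a$, and the stochastic integral of $g$, all of which are progressively measurable, and in fact $X$ has a continuous adapted version). Since $\psi' = \psi_p'$ is continuous on $E$ by Lemma~\ref{mian-lem} (it is Fr\'echet differentiable with locally H\"older-continuous derivative, hence certainly continuous as a map $E\to\mathscr{L}(E,\R)=E^*$), the composition $s\mapsto \psi'(X_s)$ is progressively measurable as an $E^*$-valued process. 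Then $s\mapsto \psi'(X_s)(g_s)$, viewed as an element of $\mathscr{L}(H,\R)=H$ (using the canonical identification), is obtained by composing $\psi'(X_s)\in\mathscr{L}(E,\R)$ with $g_s\in\gamma(H,E)\subset\mathscr{L}(H,E)$; the evaluation/composition map $\mathscr{L}(E,\R)\times\mathscr{L}(H,E)\to\mathscr{L}(H,\R)$ is continuous (jointly, on bounded sets, or separately and Borel), so the composed process is progressively measurable. One should be slightly careful that $\psi'(X_s)g_s$ is genuinely an element of $H$, i.e. that the functional $h\mapsto \psi'(X_s)(g_s h)$ on $H$ is represented by a vector; this is automatic since $H$ is a Hilbert space and the functional is bounded (by $\|\psi'(X_s)\|\,\|g_s\|_{\mathscr{L}(H,E)}$).

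For the $M^1$-bound, the key estimate is the pointwise inequality $\|\psi'(x)\|_{E^*}\le p\|x\|^{p-1}$, which follows from Lemma~\ref{mian-lem}: indeed $\psi_p'(x)=p\|x\|^{p-1}f_x$ with $\|f_x\|_{E^*}=1$ (and $\psi_p'(0)=0$). Hence for each $s$,
\begin{align*}
\|\psi'(X_s)(g_s)\|_H \le \|\psi'(X_s)\|_{E^*}\,\|g_s\|_{\gamma(H,E)} \le p\|X_s\|^{p-1}\,\|g_s\|_{\gamma(H,E)},
\end{align*}
using that $\|\cdot\|_{\mathscr{L}(H,E)}\le \|\cdot\|_{\gamma(H,E)}$. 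Integrating in $s$ and applying H\"older's inequality in $s$ with exponents $\tfrac{p}{p-1}$ and $p$ (so that $\|X_s\|^{p-1}$ pairs with the $L^p(0,T)$-norm and $\|g_s\|_{\gamma(H,E)}$ with... ) — more efficiently, bound $\|X_s\|^{p-1}\le \sup_{u\in[0,T]}\|X_u\|^{p-1}$ and pull it out:
\begin{align*}
\int_0^T \|\psi'(X_s)(g_s)\|_H\,\d s \le p\,\Big(\sup_{u\in[0,T]}\|X_u\|\Big)^{p-1}\int_0^T \|g_s\|_{\gamma(H,E)}\,\d s.
\end{align*}
Now take expectations and apply H\"older's inequality on $\Omega$ with exponents $\tfrac{p}{p-1}$ and $p$:
\begin{align*}
\E\int_0^T \|\psi'(X_s)(g_s)\|_H\,\d s \le p\,\Big(\E\sup_{u\in[0,T]}\|X_u\|^p\Big)^{\frac{p-1}{p}}\Big(\E\Big(\int_0^T\|g_s\|_{\gamma(H,E)}\,\d s\Big)^p\Big)^{\frac1p}.
\end{align*}
The first factor is finite by Lemma~\ref{lem:0} together with the hypotheses $\E(\int_0^T\|a_t\|\,\d t)^p<\infty$ and $g\in M^p([0,T];\gamma(H,E))$; the second factor is finite since $\int_0^T\|g_s\|_{\gamma(H,E)}\,\d s \le T^{1/2}(\int_0^T\|g_s\|_{\gamma(H,E)}^2\,\d s)^{1/2}$ by Cauchy--Schwarz, and the right-hand side has finite $p$-th moment again because $g\in M^p$. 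Hence the process lies in $M^1([0,T];H)$.

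I do not expect a serious obstacle here; the statement is essentially a bookkeeping lemma. The only points requiring a little care are: (i) justifying that $X$ has a progressively measurable (indeed continuous) version so that the composition with the continuous map $\psi'$ makes sense — this uses the properties of the stochastic and Lebesgue integrals listed just before the theorem; and (ii) being careful about the functional-analytic identifications ($\mathscr{L}(H,\R)\cong H$, $\gamma(H,E)\hookrightarrow\mathscr{L}(H,E)$ contractively) so that $\psi'(X_s)(g_s)$ is legitimately an $H$-valued random variable and the norm estimate $\|\psi'(X_s)(g_s)\|_H\le p\|X_s\|^{p-1}\|g_s\|_{\gamma(H,E)}$ holds. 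Everything else is H\"older's inequality and an appeal to Lemma~\ref{lem:0} and Lemma~\ref{mian-lem}.
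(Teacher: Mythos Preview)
Your progressive measurability argument is fine (and more detailed than the paper's one-line ``the progressive measurability is clear''). The integrability argument, however, establishes the wrong bound. Recall the definition given just before the theorem: $M^1([0,T];H)$ consists of those progressively measurable $H$-valued processes $\xi$ for which
\[
\E\Big(\int_0^T \n \xi_t\n_H^2\,\d t\Big)^{1/2}<\infty,
\]
i.e.\ the $L^1(\Omega;L^2(0,T;H))$-norm is finite. What you actually bound is $\E\int_0^T \n \psi'(X_s)(g_s)\n_H\,\d s$, the $L^1(\Omega;L^1(0,T;H))$-norm, and finiteness of the latter does not imply finiteness of the former.

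The fix is immediate and in fact simplifies your argument. From your correct pointwise estimate $\n \psi'(X_s)(g_s)\n_H \le p\n X_s\n^{p-1}\n g_s\n_{\gamma(H,E)}$, take the $L^2$-norm in $s$ (rather than the $L^1$-norm) and pull out the supremum:
\[
\Big(\int_0^T \n \psi'(X_s)(g_s)\n_H^2\,\d s\Big)^{1/2}
\le p\,\sup_{u\in[0,T]}\n X_u\n^{p-1}\Big(\int_0^T \n g_s\n_{\gamma(H,E)}^2\,\d s\Big)^{1/2}.
\]
Now apply H\"older's inequality in $\Omega$ with exponents $\tfrac{p}{p-1}$ and $p$, exactly as you do, to obtain
\[
\E\Big(\int_0^T \n \psi'(X_s)(g_s)\n_H^2\,\d s\Big)^{1/2}
\le p\Big(\E\sup_{u\in[0,T]}\n X_u\n^{p}\Big)^{\frac{p-1}{p}}
\Big(\E\Big(\int_0^T \n g_s\n_{\gamma(H,E)}^2\,\d s\Big)^{p/2}\Big)^{1/p},
\]
which is precisely the estimate in the paper. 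Your final Cauchy--Schwarz step (to pass from $\int\n g_s\n\,\d s$ to $(\int\n g_s\n^2\,\d s)^{1/2}$) is then unnecessary.
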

\begin{proof}
By the identity $\n \psi'(x)\n = p\n x\n^{p-1}$ and H\"older's inequality,
 \begin{align*} 
  \E\Big(\int_0^T \n \psi'(X_t)(g_t)\n_H ^2\,\d t\Big)^\frac{1}{2} 
 &\leq \E\Big(\int_0^T \n \psi'(X_t)\n ^2\n g_t\n ^2_{\gamma(H,E)}\,\d t\Big)^\frac{1}{2}\\
 &\leq \E \sup_{t\in[0,T]}\n X_t\n ^{p-1} \Big(\int_0^T\n g_t\n ^2_{\gamma(H,E)}\,\d s\Big)^\frac{1}{2}\\
 &\leq C \Big(\E \sup_{t\in[0,T]}\n X_t\n ^{p}\Big)^{\frac{p-1}{p}} 
\Big(\E \Big(\int_0^T\n g_t\n ^2_{\gamma(H,E)}   \,\d s\Big)^\frac{p}{2}\Big)^{\frac{1}{p}},
  \end{align*}
and the right-hand side is finite by the previous lemma.
The progressively measurability is clear.
\end{proof}

This lemma implies that
the stochastic integral in \eqref{eq:Ito} is well-defined.

\begin{lem}\label{lem:1b}
Let $0\le u\le t\le T$ be arbitrary and fixed.
Under the assumptions of Theorem \ref{thm-main-1}, 
the process $s\mapsto \psi'(X_u) (g_s)$ is progressively measurable and 
belongs to $M^1([0,T];H)$.
Moreover, $\P$-almost surely,  
$$ \psi'(X_{u})\int_u^t g_s\,\d W_s
= \int_u^t \psi'(X_u) (g_s)\,\d W_s.
$$
\end{lem}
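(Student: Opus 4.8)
The plan is to prove Lemma~\ref{lem:1b} by the standard three-step approximation scheme for stochastic integrals, exploiting that $\psi'(X_u)\in E^*$ is a \emph{fixed} (i.e.\ $\mathscr{F}_u$-measurable) bounded linear functional, so that ``pulling it inside the integral'' is just the statement that a deterministic bounded operator commutes with the It\^o integral, localised by an $\mathscr{F}_u$-measurable multiplier. First I would check the claims that are purely about integrability and measurability. For progressive measurability of $s\mapsto \psi'(X_u)(g_s)$ on $[u,T]$: the map $s\mapsto g_s$ is progressively measurable into $\gamma(H,E)$, and composition on the left with the fixed operator $\psi'(X_u)\in\mathscr{L}(E,\R)$ is continuous, hence the composite $s\mapsto \psi'(X_u)\circ g_s\in\gamma(H,\R)=H$ is progressively measurable on $[u,T]$ (and we extend by $0$ on $[0,u]$). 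For membership in $M^1([0,T];H)$ one estimates, exactly as in the proof of Lemma~\ref{lem:1a}, using $\n \psi'(X_u)(g_s)\n_H\le \n\psi'(X_u)\n\,\n g_s\n_{\gamma(H,E)} = p\n X_u\n^{p-1}\n g_s\n_{\gamma(H,E)}$ and then H\"older together with Lemma~\ref{lem:0}.

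The core identity I would establish first for adapted simple processes $g$ of the form $g_s=\sum_{i}1_{(s_i,s_{i+1}]}(s)\otimes A_i$ with $A_i$ finite rank and $\mathscr{F}_{s_i}$-measurable, and for the moment under the extra hypothesis $\E\,\n X_u\n^{p}<\infty$ so that $\psi'(X_u)(g_s)\in M^2([0,T];H)$ (this is guaranteed here by Lemma~\ref{lem:0} and $p\ge 2$). In that case both sides of the claimed identity are genuine $L^2$-integrals. Refining the partition so that $u$ and $t$ are grid points, $\int_u^t g_s\,\d W_s=\sum_i A_i(W_{s_{i+1}}-W_{s_i})$, and applying the fixed functional $\psi'(X_u)$ gives $\sum_i \psi'(X_u)(A_i(W_{s_{i+1}}-W_{s_i}))=\sum_i \big(\psi'(X_u)\circ A_i\big)(W_{s_{i+1}}-W_{s_i})$, which is precisely $\int_u^t \psi'(X_u)(g_s)\,\d W_s$ by definition of the integral for the simple $H$-valued process $s\mapsto \psi'(X_u)(g_s)=\sum_i 1_{(s_i,s_{i+1}]}(s)\otimes(\psi'(X_u)\circ A_i)$ (noting $\psi'(X_u)\circ A_i$ is $\mathscr{F}_u$-measurable and $\mathscr{F}_u\subseteq\mathscr{F}_{s_i}$ for $s_i\ge u$).

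Next I would pass to general $g$. Given $g\in M^p([0,T];\gamma(H,E))$, choose adapted simple $g^{(n)}\to g$ in $M^2([0,T];\gamma(H,E))$ (and, after localisation, in probability in $M([0,T];\gamma(H,E))$). By the It\^o isometry estimate \eqref{Iso-ine}/\eqref{burk-ine}, $\int_u^t g^{(n)}_s\,\d W_s\to \int_u^t g_s\,\d W_s$ in $L^2$; applying the bounded functional $\psi'(X_u)$ and using $|\psi'(X_u)(\cdot)|\le p\n X_u\n^{p-1}\n\cdot\n$ together with Cauchy--Schwarz (and $\E\,\n X_u\n^p<\infty$), the left-hand sides converge in $L^1$. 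Simultaneously $\psi'(X_u)(g^{(n)}_s)\to\psi'(X_u)(g_s)$ in $M^2([0,T];H)$, since $\n\psi'(X_u)(g^{(n)}_s-g_s)\n_H\le p\n X_u\n^{p-1}\n g^{(n)}_s-g_s\n_{\gamma(H,E)}$ and one can first condition on $X_u$ or use a truncation of $\n X_u\n$; hence the right-hand sides converge in $L^2$, and the identity passes to the limit. Finally, to remove the auxiliary moment assumption on $X_u$, I would localise with the stopping times $\tau_N=\inf\{t: \n X_t\n\ge N\}\wedge T$: replacing $g$ by $1_{[0,\tau_N]}g$ and $X_u$ by $X_{u\wedge\tau_N}$, apply the identity already proved, then use \eqref{eq:stopping} and let $N\to\infty$, the $\P$-almost sure convergence following from $\tau_N\uparrow T$. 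The main obstacle is purely bookkeeping: keeping the two approximation procedures (the approximation of $g$ and the localisation of $\n X_u\n$) compatible so that all the convergences—of the integrands in $M^2$ or in probability, of the integrals in $L^1$ or in probability, and of the localising stopping times—take place in the same mode and can be combined; there is no genuine analytic difficulty beyond $\mathscr{F}_u$-measurability of $\psi'(X_u)$ and the linearity and continuity of left-composition by a fixed functional.
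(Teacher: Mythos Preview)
Your proposal is correct and follows essentially the same route as the paper: verify the $M^1$ estimate by the same H\"older argument as in Lemma~\ref{lem:1a}, check the identity by hand for adapted simple $g$, and pass to general $g$ by approximation. The paper compresses the last step into the single phrase ``routine approximation argument''; your additional localisation via $\tau_N$ is harmless but unnecessary here, since under the hypotheses of Theorem~\ref{thm-main-1} one already has $\E\sup_{t\in[0,T]}\n X_t\n^p<\infty$ by Lemma~\ref{lem:0}, and convergence of both sides in probability (using continuity of the localised integral from $M([0,T];\gamma(H,E))$ to $L^0$) suffices to pass to the limit without any moment bookkeeping.
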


\begin{proof} 
By similar estimates as in the previous lemma,
 \begin{align*} 
  \E\Big(\int_u^t \n \psi'(X_u)(g_s)\n_H ^2\,\d s\Big)^\frac{1}{2}
  \leq C (\E \n X_u\n ^{p})^{\frac{p-1}{p}} 
\Big(\E \Big(\int_u^t\n g_s\n ^2_{\gamma(H,E)}   \,\d s\Big)^\frac{p}{2}\Big)^{\frac{1}{p}}.
  \end{align*}
The progressively measurability is again clear.
To prove the identity we first assume that $g$ is a simple adapted process of the form
 \begin{align*} 
	    g_s=\sum_{i=0}^{n-1}1_{(t_i,t_{i+1}]}(s)A_i,
\end{align*} 
where $\Pi=\{u=t_0<t_1<\cdots<t_n=t\}$ is a partition of the interval $[0,T]$
and the 
random variables are $\mathcal{F}_{t_i}$-measurable and take values in  
the space of all finite rank operators from $H$ to $E$.
Then,
\begin{align*} 
 \psi'(X_u)\int_u^t g_s\,\d W_s&=\psi'(X_u)
\Big(\sum_{i=0}^{n-1}A_{i}(W_{t_{i+1}}-W_{t_i}) \Big) \\
   &= \sum_{i=0}^{n-1}\psi'(X_u) (A_{i}(W_{t_{i+1}}-W_{t_i}))
= \int_u^t\psi'(X_u) (g_s)\,\d W_s.
\end{align*}
For general progressively measurable $g \in L^p(\Omega;L^2([0,T];\gamma(H,E)))$,
the identity follows by a routine approximation argument.
\end{proof}

 \begin{proof}  [Proof of Theorem \ref{thm-main-1}]
The proof of the theorem proceeds in two steps. All constants occurring in the proof 
may depend on $E$
and $p$, even where this is not indicated explicitly, but not on $T$. 
The numerical value of the constants may change from line to line.

\smallskip 
{\em Step 1} -- 
Applying Lemma \ref{lem:Taylor}
to the function $\psi(x)=\n x\n ^p$ 
and the process $X$, we have, for every
$t\in[0,T]$,
\begin{align*}
\psi(X_t)-\psi(x)&=\sum_{i=0}^{m(n)-1}\Big(\psi(X_{t_{i+1}^n\wedge
t})-\psi(X_{t_i^n\wedge t})\Big)\\
        &=\sum_{i=0}^{m(n)-1}\psi'(X_{t_{i}^n\wedge
t})(X_{t_{i+1}^n\wedge t}-X_{t_{i}^n\wedge t}) 
           +\sum_{i=0}^{m(n)-1}
R_\psi(X_{t_{i}^n\wedge t},X_{t_{i+1}^n\wedge t}).
\end{align*}   
We shall prove the identity \eqref{eq:Ito} by showing that 
                \begin{align*}             
\lim_{n\rightarrow\infty}\sum_{i=0}^{m(n)-1}\psi'(X_{t_{i}^n\wedge
t})(X_{t_{i+1}^n\wedge t}-X_{t_{i}^n})                    
=\int_0^t\psi'(X_s)(a_s)\,\d s+\int_0^t\psi'(X_s)( g_s) \,\d W_s
                                    \end{align*}
with convergence in probability.  
In view of the definition of $X_t$, it is enough to show that 
                \begin{align*}
                       \lim_{n\rightarrow\infty} \Big|
\sum_{i=0}^{m(n)-1}\psi'(X_{t_i^n\wedge t})\Big{(}\int_{t_i^n\wedge
t}^{t_{i+1}^n\wedge t}a_s\,\d s\Big{)}-\int_0^t \psi'(X_s)(a_s)\,\d s     \Big|=0 \ \
\text{$\P$-almost surely}
                \end{align*} 
and
\begin{align}\label{ep-31-1}
 \lim_{n\rightarrow\infty} 
\sum_{i=0}^{m(n)-1}\psi'(X_{t_i^n\wedge t})\Big{(}\int_{t_i^n\wedge
t}^{t_{i+1}^n\wedge t} g_s\,\d W_s\Big{)}-
    \int_{0}^{t} \psi'(X_s)(g_s)\,\d W_s =0 \ \ \text{in probability}.
\end{align}  
By \eqref{Ap-lem-1-ep-1}, $\P$-almost surely we have
          \begin{align*}
               & \limsup_{n\rightarrow\infty} \Big|
\sum_{i=0}^{m(n)-1}\psi'(X_{t_i^n\wedge t})\Big{(}\int_{t_i^n\wedge
t}^{t_{i+1}^n\wedge t} a_s\Big{)}-\int_{0}^{t} \psi'(X_s)(a_s)\,\d s \Big|\\
                &\leq \limsup_{n\rightarrow\infty}
\sum_{i=0}^{m(n)-1}\Big|\int_{t_i^n\wedge t}^{t_{i+1}^n\wedge t}
(\psi'(X_{t_i^n\wedge t})-\psi'(X_s))(a_s)\,\d s \Big|\\
                &\leq C
\sup_{s\in[0,T]}\n X_s\n ^{p-2}\times \limsup_{n\rightarrow\infty}
\sum_{i=0}^{m(n)-1}\int_{t_i^n\wedge t}^{t_{i+1}^n\wedge t}
\n X_{t_i^n\wedge t}-X_s\n \,\n a_s\n \,\d s \\
                &\leq C
\sup_{s\in[0,T]}\n X_s\n ^{p-2}
\times \limsup_{n\rightarrow\infty} \Big(\sup_{0 \leq i\leq m(n)-1}
\sup_{s\in[t_i^n\wedge t,t_{i+1}^n\wedge t]}\n X_{t_i^n\wedge t}-X_s\n \Big)               
\times\Big{(}\sum_{i=0}^{m(n)-1}\int_{t_i^n\wedge t}^{t_{i+1}^n\wedge t}
\n a_s\n \,\d s\Big{)}\\
                &=0,
          \end{align*} 
where we used the continuity of the process $X$ in the last
line.           

Next, by Lemma \ref{lem:1b} and the inequalities \eqref{Iso-ine}  
and \eqref{Ap-lem-1-ep-1}, 
\begin{align*}  
\ & \sum_{i=0}^{m(n)-1}\psi'(X_{t_i^n\wedge
t})\Big{(}\int_{t_i^n\wedge t}^{t_{i+1}^n\wedge t} g_s\,\d W_s\Big{)}-
    \int_{0}^{t} \psi'(X_s)(g_s)\,\d W_s\\
& \qquad =\sum_{i=0}^{m(n)-1}\int_{t_i^n\wedge
t}^{t_{i+1}^n\wedge t}\psi'(X_{t_i^n\wedge t})( g_s)\,\d W_s-
    \int_{0}^{t} \psi'(X_s)(g_s)\,\d W_s\\
& \qquad = \int_0^t\sum_{i=0}^{m(n)-1}1_{(t_i^n,t_{i+1}^n]}(s)(\psi'(X_{t_i^n\wedge
t})-\psi'(X_s))(g_s)\,\d W_s.
\end{align*}
Recall that the localized stochastic integral is continuous from 
$M([0,t];\gamma(H,E)))$ into $L^0(\Omega,\F_t;E)$. Hence, in order 
to prove that the right-hand side converges to $0$ in probability, it suffices to prove
that
$$\lim_{n\to\infty} \Big\n s\mapsto \sum_{i=0}^{m(n)-1}1_{(t_i^n,t_{i+1}^n]}(s)(\psi'(X_{t_i^n\wedge
t})-\psi'(X_s))(g_s)\Big\n _{L^2([0,t];H)} = 0 \ \hbox{ in probability}.
$$
For this, in turn, it suffices to observe that $\P$-almost surely
\begin{align*}
\ &  \lim_{n\to\infty}  \Big\n \sum_{i=0}^{m(n)-1}1_{(t_i^n,t_{i+1}^n]}(s)(\psi'(X_{t_i^n\wedge
t})-\psi'(X_s))\Big\n _{L^\infty([0,t];E^*)} \\ & \qquad = 
\lim_{n\to\infty}   \sup_{0 \leq i\leq n-1}\sup_{s\in[t_i^n\wedge t,t_{i+1}^n\wedge t]}
\n  \psi'(X_{t_i^n\wedge t})-\psi'(X_s)\n  = 0
\end{align*}
by the path continuity of $X$.

\smallskip
{\em Step 2} -- In this step we prove the estimate \eqref{eq:remainder}.
By \eqref{Ap-lem-1-ep-1}, for all $x,y\in E$ and $r\in [0,1]$ we have
\begin{align*}          
 |\psi'(x+r(y-x))-\psi'(x)| \leq (\n x\n ^{p-2}\n x-y\n +\n x-y\n ^{p-1}).
\end{align*}  
Combining this with \eqref{eq:Taylor} we obtain  
\begin{align}\label{eq:twoterms}
| R_\psi(X_{t_{i}^n\wedge t},X_{t_{i+1}^n\wedge
t})| &\leq C\n X_{t_i^n\wedge t}\n ^{p-2} \n X_{t_{i+1}^n\wedge
t}-X_{t_i^n\wedge t}\n ^2+C \n X_{t_{i+1}^n\wedge t}-X_{t_i^n\wedge t}\n ^p.
\end{align}
We shall estimate the two terms on the right hand of \eqref{eq:twoterms} side separately.

For the first term,
using the inequality $|a+b|^2\leq 2|a|^2+2|b|^2$ we obtain
\begin{align*}
\ & \sum_{i=0}^{m(n)-1}\n X_{t_i^n\wedge
t}\n ^{p-2} \n X_{t_{i+1}^n\wedge t}-X_{t_i^n\wedge t}\n ^2 \\    
& \qquad \leq
 2\sum_{i=0}^{m(n)-1} \n X_{t_i^n\wedge t}\n ^{p-2} \Big\n  \int_{t_i^n\wedge
t}^{t_{i+1}^n\wedge t}a_s\,\d s   \Big\n ^2
+2\sum_{i=0}^{m(n)-1}\n X_{t_i^n\wedge t}\n ^{p-2}\Big\n \int_{t_i^n\wedge t}^{t_{i+1}^n\wedge t}g_s
\,\d W_s\Big\n ^2 =:I_1^n+I_2^n.
\end{align*}  
For the first term we have
\begin{align*}   
I_1^n & \leq  2C\sup_{s\in[0,t]}\n X_s\n ^{p-2} \times \sup_i\Big\n 
\int_{t_i^n\wedge t}^{t_{i+1}^n\wedge t}a_s\,\d s  
\Big\n \times\sum_{i=0}^{m(n)-1}\Big\n  \int_{t_i^n\wedge t}^{t_{i+1}^n\wedge
t}a_s\,\d s   \Big\n \\
   & \leq 2C\sup_{s\in[0,t]}\n X_s\n ^{p-2}\times \sup_i\Big\n  \int_{t_i^n\wedge
t}^{t_{i+1}^n\wedge t}a_s\,\d s   \Big\n \times
\int_0^t\n a_s\n \,\d s.   \\                        
\end{align*} 
By letting $n\rightarrow\infty$ we have
$\max_{0\le i\le m(n)-1} (t_{i+1}^n-t_i^n)\rightarrow0$, so 
$$\sup_{0\leq i\leq m(n)-1}\Big\n 
\int_{t_i^n\wedge t}^{t_{i+1}^n\wedge t}a_s\,\d s\Big\n \rightarrow 0$$ as
$n\rightarrow\infty$. Therefore, 
     \begin{align*}
                \lim_{n\rightarrow\infty}I_1^n=0,\
\mathbb{P}\text{-almost surely.}
                \end{align*}   
To estimate $I_2$ we use 
\eqref{Iso-ine} and Young's inequality with $\e>0$ to infer
\begin{align*}
 \E\, \liminf_n I_2^n \leq \liminf_n \, \E I_2^n
&=\liminf_n\, \E\sum_{i=0}^{m(n)-1}\n X_{t_i^n\wedge
t}\n ^{p-2}\Big\n \int_{t_i^n\wedge t}^{t_{i+1}^n\wedge t}g_s \,\d W_s\Big\n ^2\\
 &=\liminf_n\sum_{i=0}^{m(n)-1}\E\Big(\n X_{t_i^n\wedge
t}\n ^{p-2}\E\Big(\Big\n \int_{t_i^n\wedge t}^{t_{i+1}^n\wedge t}g_s
\,\d W_s\Big\n ^2\big|\F_{t_i^n\wedge t}\Big)\Big)\\  
 &\leq
C\liminf_n\sum_{i=0}^{m(n)-1}\E\Big(\n X_{t_i^n\wedge
t}\n ^{p-2}\E\Big(\int_{t_i^n\wedge t}^{t_{i+1}^n\wedge t}\n g_s\n ^2_{\gamma(H,E)}
\,\d s\big|\F_{t_i^n\wedge t}\Big)\Big)\\ 
  &\leq
C\liminf_n\sum_{i=0}^{m(n)-1}\E\Big(\n X_{t_i^n\wedge
t}\n ^{p-2}\int_{t_i^n\wedge t}^{t_{i+1}^n\wedge t}\n g_s\n ^2_{\gamma(H,E)}
\,\d s\Big)\\  
   &\leq
C\liminf_n\, \E\Big(\sup_{s\in[0,t]}\n X_s\n ^{p-2}\sum_{i=0}^{m(n)-1}\int_{
t_i^n\wedge t}^{t_{i+1}^n\wedge t}\n g_s\n ^2_{\gamma(H,E)} \,\d s\Big)\\  
   &=
C\E\Big(\sup_{s\in[0,t]}\n X_s\n ^{p-2}\int_{0}^{t}\n g_s\n ^2_{\gamma(H,E)}
\,\d s\Big)\\
& \le 
C\varepsilon\E\Big(\sup_{s\in[0,t]}\n X_s\n ^{p}\Big)
+ C\varepsilon^{1-\frac{p}{2}}\E  \Big(\int_{0}^{t}\n g_s\n ^2_{\gamma(H,E)}
\,\d s\Big)^{\frac{p}{2}}.
\end{align*} 

Next we estimate the second term in \eqref{eq:twoterms}. We have
\begin{align*}
\sum_{i=0}^{m(n)-1}\n X_{t_{i+1}^n\wedge
t}-X_{t_i^n\wedge t}\n ^p
&\leq C\sum_{i=0}^{m(n)-1}\Big\n  \int_{t_i^n\wedge
t}^{t_{i+1}^n\wedge t}a_s\,\d s   \Big\n ^p
+C\sum_{i=0}^{m(n)-1}\Big\n \int_{t_i^n\wedge
t}^{t_{i+1}^n\wedge t}g_s \,\d W_s\Big\n ^p =: I_3^n+I_4^n.
\end{align*}  
   A similar consideration as before yields 
\begin{align*}
 \lim_{n\rightarrow\infty} I_3^n\leq
C\lim_{n\rightarrow\infty}\sup_{0\leq i\leq m(n)-1}\Big\n  \int_{t_i^n\wedge
t}^{t_{i+1}^n\wedge t}a_s\,\d s   \Big\n ^{p-1}\times
\int_0^t\n a_s\n \,\d s=0.   
\end{align*}
  Moreover, by Burkholder's inequality \eqref{burk-ine}, 
\begin{align*}
    \E\liminf_n I_4^n 
\leq \liminf_n \E I_4^n 
& =C \liminf_n\sum_{i=0}^{m(n)-1}\E \Big\n \int_{t_i^n\wedge t}^{t_{i+1}^n\wedge
t}g_s \,\d W_s\Big\n ^p\\
 &\leq
C\liminf_n\sum_{i=0}^{m(n)-1}\E\Big(\int_{t_i^n\wedge
t}^{t_{i+1}^n\wedge t}\n g_s\n ^2_{\gamma(H,E)} \,\d s\Big)^{\frac{p}{2}}\\
&\leq C
\liminf_n\E\Big(\sum_{i=0}^{m(n)-1}\int_{t_i^n\wedge t}^{t_{i+1}^n\wedge
t}\n g_s\n ^2_{\gamma(H,E)} \,\d s\Big)^{\frac{p}{2}}\\
&=
C\E\Big(\int_0^t\n g_s\n ^2_{\gamma(H,E)}\,\d s\Big)^{\frac{p}{2}}.
\end{align*}

Collecting terms, for any $\e>0$ 
we obtain the estimate
\begin{align*}\E\liminf_{n\rightarrow\infty}\sum_{i=0}^{m(n)-1} | R_\psi(X_{t_{i}^n\wedge
t},X_{t_{i+1}^n\wedge t})|
 \le 
C\varepsilon\E\Big(\sup_{s\in[0,t]}\n X_s\n ^{p}\Big)
+ C(\varepsilon^{1-\frac{p}{2}}+1)\E  \Big(\int_{0}^{t}\n g_s\n ^2_{\gamma(H,E)}
\,\d s\Big)^{\frac{p}{2}}.
\end{align*}
\end{proof}

In the proof of Theorem \ref{thm:main} we will also need the following simple observation.

\begin{lem}\label{TH-main-rem-1}  
$\P$-Almost surely we have
\begin{align}\label{ep-90}
\liminf_{n\to\infty} \sup_{t\in [0,T]} \sum_{i=0}^{m(n)-1}
| R_\psi(X_{t_{i}^n\wedge t},X_{t_{i+1}^n\wedge
t})| \leq \liminf_{n\to\infty} \sum_{i=0}^{m(n)-1} | R_\psi(X_{t_{i}^n},X_{t_{i+1}^n})| .
\end{align} 
\end{lem}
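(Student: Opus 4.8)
The plan is to compare, for each fixed partition $\Pi_n$, the supremum over $t\in[0,T]$ of the partial sum with its value at $t=T$, exploiting that the truncated points $t_i^n\wedge t$ collapse for large $i$ and that $R_\psi(x,x)=0$. Throughout one works on the full-measure event on which $s\mapsto X_s$ has continuous paths on $[0,T]$ (recall that $X$ is the sum of a Bochner integral and a continuous local martingale).

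First I would note that if $t\in(t_k^n,t_{k+1}^n]$, then $t_i^n\wedge t=t_i^n$ for $0\le i\le k$ and $t_i^n\wedge t=t$ for $k+1\le i\le m(n)$. Since $R_\psi(z,z)=\int_0^1(\psi'(z)(0)-\psi'(z)(0))\,\d r=0$, the summands with index $i\ge k+1$ vanish, so
\begin{align*}
\sum_{i=0}^{m(n)-1}|R_\psi(X_{t_i^n\wedge t},X_{t_{i+1}^n\wedge t})|
=\sum_{i=0}^{k-1}|R_\psi(X_{t_i^n},X_{t_{i+1}^n})|+|R_\psi(X_{t_k^n},X_t)|.
\end{align*}
The remainders $|R_\psi(X_{t_i^n},X_{t_{i+1}^n})|$ being nonnegative, the first sum is at most $\sum_{i=0}^{m(n)-1}|R_\psi(X_{t_i^n},X_{t_{i+1}^n})|$; taking the supremum over $t\in[0,T]$ then gives
\begin{align*}
\sup_{t\in[0,T]}\sum_{i=0}^{m(n)-1}|R_\psi(X_{t_i^n\wedge t},X_{t_{i+1}^n\wedge t})|
\le\sum_{i=0}^{m(n)-1}|R_\psi(X_{t_i^n},X_{t_{i+1}^n})|+\delta_n,
\end{align*}
where $\delta_n:=\max_{0\le k\le m(n)-1}\sup_{t\in[t_k^n,t_{k+1}^n]}|R_\psi(X_{t_k^n},X_t)|$.

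The crux is to show $\delta_n\to0$ $\P$-almost surely. For this I would use the pointwise bound $|R_\psi(x,y)|\le C(\n x\n^{p-2}\n x-y\n^2+\n x-y\n^p)$, which follows from \eqref{Ap-lem-1-ep-1} (with $q=2$) and \eqref{eq:Taylor} exactly as in the derivation of \eqref{eq:twoterms}. Setting $\omega_n:=\max_{0\le k\le m(n)-1}\sup_{t\in[t_k^n,t_{k+1}^n]}\n X_{t_k^n}-X_t\n$, this yields $\delta_n\le C\big(\sup_{s\in[0,T]}\n X_s\n^{p-2}\big)\omega_n^2+C\omega_n^p$. Since $s\mapsto X_s$ is continuous on the compact interval $[0,T]$ it is uniformly continuous, so $\n\Pi_n\n\to0$ forces $\omega_n\to0$, while $\sup_{s\in[0,T]}\n X_s\n^{p-2}<\infty$; hence $\delta_n\to0$ almost surely. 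Taking $\liminf_{n\to\infty}$ in the inequality above and using $\liminf_n(a_n+b_n)\le\liminf_n a_n+\limsup_n b_n$ with $b_n=\delta_n\to0$ then gives \eqref{ep-90}.

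I expect the only slightly delicate point to be the bookkeeping in the first step — recognising that truncation together with $R_\psi(x,x)=0$ leaves, beyond the value at $t=T$, only the single boundary remainder $|R_\psi(X_{t_k^n},X_t)|$; once this is isolated, controlling it uniformly is just a matter of the modulus of continuity of the path.
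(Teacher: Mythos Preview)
Your proof is correct and follows essentially the same approach as the paper: both arguments split off the single boundary remainder $|R_\psi(X_{t_k^n},X_t)|$, bound it by the estimate \eqref{eq:twoterms}, and use path continuity to make it vanish in the limit. Your version is slightly more explicit in taking the supremum over $t$ via the modulus of continuity $\omega_n$ and invoking uniform continuity on $[0,T]$, whereas the paper handles this step in one line.
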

\begin{proof}
Fix $t\in(0,T]$ and let $k(n)$ be the unique
index such that $t\in(t_{k(n)}^n,t_{k(n)+1}^n]$. 
Then 
      \begin{align*}  
\sum_{i=0}^{m(n)-1} | R_\psi(X_{t_{i}^n\wedge
t},X_{t_{i+1}^n\wedge t})| &=\sum_{i=0}^{k(n)-1} | R_\psi(X_{t_{i}^n},X_{t_{i+1}^n})| +
| R_\psi(X_{t_{k(n)}^n},X_{t})| \\
&\leq \!\! \sum_{i=0}^{m(n)-1}
| R_\psi(X_{t_{i}^n},X_{t_{i+1}^n})| + | R_\psi(X_{t_{k(n)}^n},X_{t})|   \\
&\leq \!\!\sum_{i=0}^{m(n)-1}
| R_\psi(X_{t_{i}^n},X_{t_{i+1}^n})| +C\n X_{t_{k(n)}^n}\n ^{p-2}
\n X_t-X_{t_{k(n)}^n}\n ^2+C \n X_t-X_{t_{k(n)}^n}\n ^p\\
&\leq \!\!
 \sum_{i=0}^{m(n)-1}
| R_\psi(X_{t_{i}^n},X_{t_{i+1}^n})| +C\sup_{s\in[0,T]}\n X_s\n ^{p-2}
\n X_t-X_{t_{k(n)}^n}\n ^2+C \n X_t-X_{t_{k(n)}^n}\n ^p.
\end{align*}
Now \eqref{ep-90} follows by
taking the limes inferior for $n\rightarrow\infty$ and using path continuity.
\end{proof}

\section{Proof of Theorem \ref{thm:main}}

We proceed in four steps. In Steps 1 and 2 we establish the estimate in the theorem for 
$g\in M^p([0,T];\gamma(H,E))$ with $2\le p<\infty$.
In order to be able to cover exponents $0<p<2$ in Step 3, we need a stopped version of the inequalities
proved in Steps 1 and 2. For reasons of economy of presentations, we therefore build in
a stopping time $\tau$ from the start. In Step 4 we finally consider the case where
$g\in M([0,T];\gamma(H,E))$.

We shall apply (a special case of) Lenglart's inequality \cite[Corollaire II]{[Lenglart]} which states that if 
$(\xi_t)_{t\in [0,T]}$ and
$(a_t)_{t\in [0,T]}$ are continuous non-negative adapted processes, the latter non-decreasing, 
such that $\E \xi_\tau \le \E a_\tau$ for all stopping times $\tau$ with values in $[0,T]$, 
then for all $0<r<1$ one has
\begin{equation}\label{eq:Lenglart} 
\E \sup_{0\le t\le T} \xi_t^r \le \frac{2-r}{1-r}\E a_T^r.
\end{equation}

\smallskip
{\em Step 1} -- 
Fix $p\ge 2$ and suppose first that $g\in M^p([0,T];\gamma(H,\dom(A))$.  
As is well known (see \cite{[Ondrejat]}), under this condition
the process $X_t=\int_0^t e^{(t-s)A}
g_s \,\d W_s$ is a strong solution to the equation
\begin{align*}
dX_t=AX_t\, \d t+g_t\,\d W_t,\ \ t\ge 0; \  \ X_0=0.
\end{align*}
   In other words, $X$ satisfies  
\begin{align*}
   X_t=\int_0^tAX_s\,\d s+\int_0^tg_s \,\d W_s\ \ \forall t\in [0,T] \ \ \mathbb{P}\text{-almost surely}.
\end{align*}
Hence if $\tau$ is a stopping time with values in $[0,T]$,
then by \eqref{eq:stopping},
\begin{align*}
   X_{t\wedge \tau} =\int_0^t 1_{[0,\tau]}(s)AX_{s}\,\d s+\int_0^t  1_{[0,\tau]}(s)  g_s \,\d W_s
\ \ \forall t\in [0,T], \ \ \mathbb{P}\text{-almost surely}.
\end{align*}
Let us check next that  $a_t := 1_{[0,\tau]}(t) AX_t$ 
satisfies the assumptions of Theorem \ref{thm-main-1}.
Indeed,  with $h_t:=  1_{[0,\tau]}(t) Ag_t$ we have, 
using the contractivity of the semigroup $S$ and Burkholder's inequality \eqref{burk-ine},
\begin{align*}
 \E \Big(\int_0^T \n a_t\n \,\d t \Big)^p 
 & \le
\E \Big(\int_0^T \Big\n \int_0^t e^{(t-s)A}h_s\,\d W_s\Big\n \,\d t \Big)^p 
\\ & \le C T^{p-1} \E \int_0^T \Big\n \int_0^t e^{(t-s)A}h_s\,\d W_s\Big\n ^p\,\d t
\\ & \le C T^{p-1} \E \int_0^T \Big(\int_0^t \n e^{(t-s)A}h_s\n_{\gamma(H,E)} ^2\,\d s\Big)^\frac{p}{2} \,\d t
\\ & \le C T^p\E \Big(\int_0^T \n h_s\n_{\gamma(H,E)} ^2\,\d s\Big)^\frac{p}{2} < \infty.
\end{align*}
Hence we may apply Theorem \ref{thm-main-1} and infer that
\begin{align*}
 \n X_{t\wedge \tau}\n ^p & 
=\int_0^t 1_{[0,\tau]}(s)\psi'(X_s)(AX_s)\,\d s
\\ & \qquad +\int_0^t 1_{[0,\tau]}(s)\psi'(X_s)( g_s) \,\d W_s
+\lim_{n\to\infty}\sum_{i=0}^{m(n)-1} R_\psi(X_{t_{i}^n\wedge t\wedge
\tau},X_{t_{i+1}^n\wedge t\wedge \tau}) 
\\ & \le
\int_0^t 1_{[0,\tau]}(s)\psi'(X_s)( g_s) \,\d W_s
 +\lim_{n\to\infty} \sum_{i=0}^{m(n)-1}  R_\psi(X_{t_{i}^n\wedge t\wedge
\tau},X_{t_{i+1}^n\wedge t\wedge \tau}) 
\end{align*}
since $\psi'(x)(Ax)\leq 0$ for all $x\in E$ 
by the contractivity of $e^{tA}$ (see \cite[Lemma 4.2]{BHZ}). 
Hence, by Lemma \ref{TH-main-rem-1},
\begin{align*}
 \E\sup_{t\in[0,T]}\n X_{t\wedge \tau}\n ^p
&   \leq\E\sup_{t\in[0,T]}
\int_0^t 1_{[0,\tau]}(s)\psi'(X_s)( g_s) \,\d W_s
 +\E\sup_{t\in [0,T]}\liminf_{n\rightarrow\infty}\sum_{i=0}^{m(n)-1} | R_\psi(X_{t_{i}^n\wedge t\wedge
\tau},X_{t_{i+1}^n\wedge t\wedge \tau})| \\
&  \leq\E\sup_{t\in[0,T]}
\int_0^t 1_{[0,\tau]}(s)\psi'(X_s)( g_s) \,\d W_s
 +\E\liminf_{n\rightarrow\infty}\sum_{i=0}^{m(n)-1} | R_\psi(X_{t_{i}^n\wedge
\tau},X_{t_{i+1}^n\wedge \tau})| \\
& \leq C\E\sup_{t\in[0,T]}
\int_0^t1_{[0,\tau]}(s)\psi'(X_s)( g_s) \,\d W_s
\\ & \qquad +\varepsilon
C\E\sup_{s\in[0,T]} \n X_{s\wedge \tau}\n^p +C_{\e}\E\Big(\int_{0}
^T1_{[0,\tau]}(s)\n g_s\n _{\gamma(H,E)}^2\,\d s\Big)^{\frac{p}{2}}.
\end{align*}
By Burkholder's inequality \eqref{burk-ine}
and the identity $\n \psi'(y)\n =p\n y\n ^{p-1}$,
\begin{align*} 
\E\sup_{t\in[0,T]}\Big|\int_0^t1_{[0,\tau]}(s)\psi'(X_s)( g_s)
\,\d W_s\Big|
    &\leq
C\E\Big(\int_0^T1_{[0,\tau]}(s)\big\n \psi'(X_s)\big\n ^2\n g_s\n _{\gamma(H,E)}^2\,\d s\Big)^{
\frac{1}{2}}\\
&=C
\E\Big(\int_0^T1_{[0,\tau]}(s)\big\n X_s\big\n ^{2(p-1)}\n g_s\n _{\gamma(H,E)}^2\,\d s\Big)^{\frac{1}{2
}}\\
&\leq C
\E\Big(\sup_{t\in[0,T]}\n X_{t\wedge \tau}\n ^{p-1}\Big(\int_0^T1_{[0,\tau]}(s)\n g_s\n _{\gamma(H,E)}^2\,
\d s\Big)^{\frac{1}{2}}\Big)\\
&\leq
C p^p\Big(\E\sup_{t\in[0,T]}\n X_{t\wedge \tau}\n ^{p}\Big)^{\frac{p-1}{p}}\Big(
\E\Big(\int_0^T1_{[0,\tau]}(s)\n g_s\n _{\gamma(H,E)}^2\,\d s\Big)^{\frac{p}{2}}\Big)^{\frac{1}{p}}\\
&\leq
C\varepsilon\E\sup_{t\in[0,T]}\n X_{t\wedge \tau}\n ^{p}+ C_\e
\E\Big(\int_0^T1_{[0,\tau]}(s)\n g_s\n _{\gamma(H,E)}^2\,\d s\Big)^{\frac{p}{2}},
    \end{align*}                                                                                                    
where we also used the H\"older's inequality and Young's
inequality. 

Combining these estimates and taking $\varepsilon>0$ small enough, 
we infer that
 \begin{align*}
   \E\sup_{t\in[0,T]}\n X_{t\wedge \tau}\n ^p\leq
C\E\Big(\int_{0}^T1_{[0,\tau]}(s)\n g_s\n _{\gamma(H,E)}^2\,\d s\Big)^{\frac{p}{2}}. 
 \end{align*} 

\smallskip
{\em Step 2} -- Now let $g\in M^p([0,T];\gamma(H,E)$ be arbitrary.
Set $g^n=n(nI-A)^{-1} g$, $n\ge 1$. 
These processes satisfy the assumptions of Step 1 
and we have $\n g^n\n _{\gamma(H,E)}\leq \n g\n _{\gamma(H,E)}$ pointwise.
Define $X_t^n=\int_0^te^{(t-s)A}g_s^n\,\d s.$
From Step 1 we know that for any stopping time $\tau$ in $[0,T]$ we have
\begin{align*}  
\E\sup_{t\in[0,T]}\n X_{t\wedge\tau}^n\n ^p\leq
C\E\Big(\int_{0}^T1_{[0,\tau]}(s)\n g_s^n\n _{\gamma(H,E)}^2\,\d s\Big)^{\frac{p}{2}}.
\end{align*}
In particular, as $n,m\rightarrow\infty$, 
\begin{align*} 
       \E\sup_{t\in[0,T]}\n X_{t}^n-X_{t}^m\n ^p\rightarrow 0.
\end{align*} 
In these circumstances there is a process $\bar{X}$ such that
$\lim_{n\rightarrow\infty}\E\sup_{t\in[0,T]}\n \bar{X}_{t}^n-X_{t}\n ^p= 0$ and
\begin{align}\label{lem:ineq-stop}
  \E\sup_{t\in[0,T]}\n \bar{X}_{t\wedge \tau}\n ^p\leq
C\E\Big(\int_{0}^T1_{[0,\tau]}(s)\n g_s\n _{\gamma(H,E)}^2\,\d s\Big)^{\frac{p}{2}}.  
\end{align}
Also, notice that for every $t\in[0,T]$, we have 
\begin{align*} 
\E\n X_t^n- X_t\n ^p=\E\Big\n \int_0^te^{(t-s)A}g_s^n\,\d s-\int_0^te^{(t-s)A}
g_s\,\d s\Big\n ^p\leq C\big(\E\int_0^t\n g_s^n-g_s\n _{\gamma(H,E)}^2\,\d s\Big)^p.
\end{align*}
Hence $X_t^n\rightarrow X_t$ in $L^p(\Omega;E)$. Therefore, $\bar{X}$ is a
modification of $X$.  
This concludes the proof for $p\ge 2$.

\smallskip
{\em Step 3} -- In this step we extend the result to exponents $0<p<2$.
First consider the case where $g\in M^2([0,T];\gamma(H,E))$.
By \eqref{lem:ineq-stop}, for all stopping times $\tau$ in $[0,T]$ we have   
$$\E \n X_\tau\n ^2 \le C\E\int_0^\tau \n g_s\n ^2_{\gamma(H,E)}\, \d s.$$
It then follows from Lenglart's inequality \eqref{eq:Lenglart} 
that for all $0<p<2$,
$$\E \sup_{t\in [0,T]}\n X_t\n ^p \le C \E\Big(\int_0^T \n g_s\n ^2_{\gamma(H,E)}\, \d s\Big)^\frac{p}{2}.
$$
For $g\in M^p([0,T];\gamma(H,E))$ the result follows by approximation.

{\em Step 4} -- Finally, the existence of a continuous version for the process $X$ under the assumption 
$g\in M([0,T];\gamma(H,E))$ follows by a standard localisation argument.


\end{document}